\renewcommand{\d}{\mathrm{d}}
\newcommand{\D}{\mathrm{D}}
\newcommand{\e}{\mathrm{e}}
\newtheorem{Thm}{Theorem}[section]
\newtheorem{Lem}[Thm]{Lemma}
\newtheorem{Prop}[Thm]{Proposition}
\newtheorem{Cor}[Thm]{Corollary}
\newtheorem{Rem}[Thm]{Remark}
\newtheorem{Def}[Thm]{Definition}
\newtheorem{Ex}[Thm]{Example}
\newtheorem{Fact}[Thm]{Fact}
\newtheorem{Nota}[Thm]{Notation}
\newtheorem*{Con}{Conjecture}
\def\R{\mathbb{R}}
\def\Q{\mathbb{Q}}
\def\C{\mathbb{C}}
\def\T{\mathbb{T}}
\def\to{\longrightarrow}
\def\cD{\mathcal{D}}
\def\cH{\mathcal{H}}
\def\cP{\mathcal{P}}
\def\cU{\mathcal{U}}
\def\b{\beta}
\def\e{\epsilon}
\def\c{\gamma}
\def\D{\Delta}
\def\d{\delta}
\def\n{\eta}
\def\l{\lambda}
\def\s{\sigma}
\def\sl{\mathfrak{sl}}
\def\fb{\mathfrak{b}}
\def\g{\mathfrak{g}}
\def\fh{\mathfrak{h}}
\def\gl{\mathfrak{gl}}
\def\ox{\otimes}
\def\o+{\oplus}
\def\bo+{\bigoplus}
\def\x{\times}
\def\p[#1,#2]{\phi_{#1,#2}}
\def\til[#1]{\widetilde{#1}}
\def\what[#1]{\widehat{#1}}
\def\bS{\textbf{S}}
\def\z[#1]{z_{#1}}
\def\oo{\infty}
\def\=>{\Longrightarrow}
\def\<{\langle}
\def\>{\rangle}
\def\corr{\longleftrightarrow}
\def\^{\wedge}
\def\+{\dagger}
\def\inv{^{-1}}
\def\over[#1]{\overline{#1}}
\def\vec[#1]{\overrightarrow{#1}}
\def\mat[#1, #2]{\left[\begin{array}{ccccc}#1\end{array}\left|\begin{array}{c}#2\end{array}\right.\right]}
\def\xto[#1]{\xrightarrow{#1}}
\def\dd[#1,#2]{\frac{d#1}{d#2}}
\def\del[#1,#2]{\frac{\partial #1}{\partial #2}}
\def\Facts[#1]{\begin{Fact}\mbox{}\begin{itemize}#1\end{itemize}\end{Fact}}
\def\Notation[#1]{\begin{Nota}\mbox{}\begin{itemize}#1\end{itemize}\end{Nota}}
\def\Eqn[#1]{\begin{eqnarray*}#1\end{eqnarray*}}
\def\tab{\;\;\;\;\;\;}
\newcommand{\veca}[2][cccccccccccccccccccccccccccccccccccccccccc]{\left(\begin{array}{#1}#2 \\ \end{array} \right)}
\newcommand{\Eq}[1]{\begin{align}#1\end{align}}
\newcommand{\case}[2][cccccccccccccccccccccccccccccccccccccccccc]{\left\{\begin{array}{#1}#2 \\ \end{array}\right.}
\begin{document}

\title{Gauss-Lusztig Decomposition for $GL_q^+(N,\R)$ and Representation by $q$-Tori}
\author{  Ivan C.H. Ip\footnote{
          Kavli IPMU (WPI), 
          The University of Tokyo, 
		Kashiwa, Chiba 
		277-8583, Japan
\newline
          Email: ivan.ip@ipmu.jp}
          }
\date{}
\numberwithin{equation}{section}

\maketitle

\begin{abstract}
We found an explicit construction of a representation of the positive quantum group $GL_q^+(N,\R)$ and its modular double $GL_{q\til[q]}^+(N,\R)$ by positive essentially self-adjoint operators. Generalizing Lusztig's parametrization, we found a Gauss type decomposition for the totally positive quantum group $GL_q^+(N,\R)$ for $|q|=1$, parametrized by the standard decomposition of the longest element $w_0\in W=S_{N-1}$. Under this parametrization, we found explicitly the relations between the standard quantum variables, the relations between the quantum cluster variables, and realizing them using non-compact generators of the $q$-tori $uv=q^2 vu$ by positive essentially self-adjoint operators. The modular double arises naturally from the transcendental relations, and an $L^2(GL_{q\til[q]}^+(N,\R))$ space in the von Neumann setting can also be defined.
\end{abstract}

{\small {\textbf {2010 Mathematics Subject Classification.} Primary 20G42, Secondary 81R50}}

{\small {\textbf{ Keywords.} Gauss decomposition, quantum groups, positivity, self-adjoint operators, Weyl pair, quantum tori, cluster algebra} }
\newpage
\tableofcontents
\section{Introduction}

The goal of the present work is to give an explicit construction of a representation of the positive quantum group $GL_q^+(N,\R)$ and its modular double $GL_{q\til[q]}^+(N,\R)$ by positive essentially self-adjoint operators acting on certain Hilbert space $\cH$. This is done by finding a quantum analogue of the Gauss-Lusztig decomposition for $GL_q(N)$.

Let $G$ be a semi-simple group of simply-laced type, $T$ its $\R$-split maximal torus of rank $r$, and $U^{\pm}$ its maximal unipotent subgroup with $\dim U^+=m$. The Gauss decomposition of the max cell of $G$ is given by 
\Eq{ G=U^- T U^+.}
In type $A_r$ $(N=r+1)$, this amounts to the decomposition into lower triangular, diagonal and upper triangular matrices.

On the other hand, given a \emph{totally positive} matrix $G_{>0}$, where all entries of the matrix and the determinants of its minors are strictly positive, it can be decomposed as 
\Eq{ G_{>0}=U_{>0}^- T_{>0} U_{>0}^+,}
where all the entries and determinant of the minors of $U^{\pm}$ and $T$ are strictly positive if they are not identically zero. Lusztig in \cite{Lu} discovered a remarkable parametrization of $G_{>0}$ using a decomposition of the maximal Weyl group element $w_0\in W$. Let $w_0=s_{i_1}...s_{i_m}$ be a reduced expression for $w_0$, then there is an isomorphism between $\R_{>0}^m\to U_{>0}^+$ given by
\Eq{(a_1,a_2,...,a_m)\mapsto x_{i_1}(a_1)x_{i_2}(a_2)...x_{i_m}(a_m),}
where $x_{i_k}(a_k) = I_N+a_k E_{i_k,i_k+1}$ and $E_{i,j}$ is the matrix with 1 at the entry $(i,j)$ and 0 otherwise. Similar result also holds for $U_{>0}^-$. With this isomorphism Lusztig went on to generalize the notion of total positivity to Lie groups of arbitrary type.

In \cite{BFZ}, Berenstein et al. studied this decomposition for type $A_r$, in the context now known as \emph{cluster algebra}. They showed various relations and parametrizations using the cluster variables, in this case corresponding to determinant of the different minors. Corresponding to the canonical decomposition of $w_0$ is the parametrization using \emph{initial minors}, which are the determinants of those square sub-matrices that start from either the top row or the leftmost column. Using this parametrization, we found in \cite{FI} a family of positive principal series representations of the modular double $\cU_{q\til[q]}(\sl(N,\R))$, where the notion of the modular double was first introduced by Faddeev \cite{Fa1,Fa2} for $N=2$. These positive representations generalize the self-dual representations of $\cU_{q\til[q]}(\sl(2,\R))$ studied for example in \cite{BT,Ip,PT1}.

On the other hand, in order to study the quantum group $GL_q^+(2,\R)$ in the $C^*$-algebraic and von Neumann setting, in \cite{Ip, Pu} a quantum version of the Gauss decomposition for $GL_q(2)$ is studied, where any matrices are decomposed into product of the form
\Eq{\veca{z_{11}&z_{12}\\z_{21}&z_{22}}=\veca{u_1&0\\v_1&1}\veca{1&u_2\\0&v_2},}
where $u_iv_i=q^2v_iu_i$ are mutually commuting Weyl pair that generates the algebra of $q$-tori. In the split case, we set $|q|=1$, where 
\Eq{q=e^{\pi ib^2},\tab\til[q]=e^{\pi ib^{-2}}} with $b^2\in\R\setminus\Q$, $0<b<1$. Then the Weyl pair are represented by the canonical positive essentially self-adjoint operators 
\Eq{u=e^{2\pi bx},\tab v=e^{2\pi bp},} and the above decomposition gives a realization of the positive quantum group $GL_q^+(2,\R)$ where all entries and the quantum determinant are represented by positive essentially self-adjoint operators acting on $L^2(\R^2)$. Moreover, by replacing $b\to b\inv$ we obtain the representations for the modular double $GL_{q\til[q]}^+(2,\R)$. It is further shown in \cite{Ip} that $GL_q^+(2,\R)$ is the Drinfeld-Woronowicz's \emph{quantum double group} \cite{PW} over the quantum $ax+b$ group, and its harmonic analysis is studied in detail. A new Haar functional is discovered, and an $L^2$-space of ``functions" over $GL_{q\til[q]}^+(2,\R)$ is defined using this Haar functional. Then we proved that the regular representation of the modular double $U_{q\til[q]}(\sl(2,\R))$ on $L^2(GL_{q\til[q]}^+(2,\R))$ decomposes into direct integral of the positive principal series representations.

Combining the approaches above, our aim in this paper is to find the Gauss-Lusztig decomposition of the positive quantum group of higher rank, $GL_q^+(N,\R)$, in terms of the unipotent parameters $a_i$ defined above. These parameters are no longer commuting positive real numbers, and it is the goal of this paper to discover their quantum relations with each other, such that the decomposition gives precisely the definition of the quantum group $GL_q(N)$, and furthermore they are represented by positive essentially self-adjoint operators. Let us call two variables \emph{quasi-commuting} if they commute up to a power of $q^2$. In this paper we prove the following Theorem:

\begin{Thm}[Gauss-Lusztig Decomposition] The generators of the quantum group $GL_q^+(N,\R)$ can be represented by $N^2$ operators
$$\{b_{m,n}, U_k, a_{m,n}\}$$
with $1\leq n\leq m\leq N-1,1\leq k\leq N$, where each variable is positive self-adjoint operator that commutes or $q^2$-commutes with each other, so that
\begin{itemize} 
\item[(1)] The variables $\{U_k, a_{m,n}\}$ generate the upper triangular quantum Borel subgroup $T_{>0}U_{>0}^+$,
\item[(2)] The variables $\{b_{m,n},U_k\}$ generate the lower triangular quantum Borel subgroup $U_{>0}^-T_{>0}$,
\item[(3)] The variables $a_{m,n}$ commute with $b_{m,n}$,
\end{itemize}
Furthermore, the Gauss-Lusztig decomposition for the other parts of the modular double $GL_{\til[q]}(N,\R)$ can be obtained by replacing all variables $\{b_{m,n}, U_k, a_{m,n}\}$ by their tilde version 
\Eq{x\mapsto \til[x]:=x^{\frac{1}{b^2}}.}
\end{Thm}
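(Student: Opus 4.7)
The plan is to lift Lusztig's classical construction to the quantum setting in three stages, closely following the $N=2$ strategy of \cite{Ip,Pu}.

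\textbf{Setup.} Fix the canonical reduced expression
\[
w_0 = (s_1)(s_2 s_1)(s_3 s_2 s_1) \cdots (s_{N-1} s_{N-2} \cdots s_1)
\]
of the longest element of $W = S_N$, so that the positions of the simple reflections are indexed by pairs $(m,n)$ with $1 \le n \le m \le N-1$; this is the decomposition underlying the initial-minor parametrization of \cite{BFZ,FI}. Attach to each such position a positive self-adjoint operator $a_{m,n}$ (upper nilpotent direction) and $b_{m,n}$ (lower), plus an extra positive self-adjoint $U_k$ for $1\le k\le N$ (torus direction). With elementary matrices $x_i(a) = I + a E_{i,i+1}$ and $y_i(b) = I + b E_{i+1,i}$, define
\[
U_q^+ = \prod_{(m,n)} x_{i_{m,n}}(a_{m,n}), \quad U_q^- = \prod_{(m,n)} y_{i_{m,n}}(b_{m,n}), \quad T_q = \mathrm{diag}(U_1,\ldots,U_N),
\]
with products taken in the order prescribed by the reduced expression.

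\textbf{Borel relations.} I would then determine the required $q^2$-commutation relations among the $\{a_{m,n}\}$ by demanding that the entries of $T_q U_q^+$ generate the upper quantum Borel of $GL_q(N)$, i.e.\ satisfy the upper-triangular part of the RTT/FRT relations. Guided by \cite{BFZ}, the correct commutation pattern should be readable from the wiring diagram of the chosen reduced expression: two parameters $q^2$-commute precisely when their strings are linked in a specific way, so that the whole collection can be realized as a tensor product of Weyl pairs $uv = q^2 vu$, concretely $u = e^{2\pi b x}$, $v = e^{2\pi b p}$ acting on $L^2(\R^{\binom{N}{2}})$. Positivity of each matrix entry is automatic, since every entry is a sum with nonnegative integer coefficients of products of positive quasi-commuting self-adjoint operators, and essential self-adjointness follows from the standard theory of sums of positive Weyl monomials. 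The same procedure, with reversed triangularity, fixes the relations among $\{b_{m,n}\}$ and yields item (2).

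\textbf{Global relations and modular double.} Combining the two halves as $Z = U_q^- T_q U_q^+$, the only further constraint needed for $Z$ to satisfy the full $GL_q(N)$ RTT relations is that every $a_{m,n}$ commute with every $b_{m',n'}$; I would verify this by expanding $R\, Z_1 Z_2 = Z_2 Z_1 R$ entry by entry and using the Borel relations already established to collapse the mixed terms, which is item (3). For the modular double, substituting $b\mapsto b^{-1}$ in every Weyl-pair generator $e^{2\pi b X}$ yields $e^{2\pi b^{-1} X}$, which is precisely the transcendental map $x\mapsto x^{1/b^2}$; these tilded operators are positive self-adjoint, commute weakly with the untilded ones on a common core of analytic vectors, and satisfy the analogous relations with $\til[q] = e^{\pi i/b^2}$, exactly as in the $N=2$ analysis of \cite{Ip,Pu}.

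\textbf{Main obstacle.} The hard part is the second step: correctly guessing the quasi-commutation pattern for $\{a_{m,n}\}$ and then verifying inductively that the entire family of Borel RTT relations holds simultaneously. I would run this verification by induction on $N$, embedding the previously constructed $GL_q(N-1)$ into the upper-left block of $GL_q(N)$, so that only the interaction of the new block of simple reflections $s_{N-1} s_{N-2} \cdots s_1$ with the old parameters must be handled directly. This should reduce the problem to a finite family of commutation identities between $q$-exponentials, checkable with standard Baker--Campbell--Hausdorff-type Weyl-pair lemmas.
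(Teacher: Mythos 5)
Your outline reproduces the architecture of the paper's argument (reduced word for $w_0$, relations forced by the quantum-group axioms, realization by Weyl pairs, positivity via $q$-power identities, modular double via $x\mapsto x^{1/b^2}$), but it defers exactly the step that constitutes the theorem: the explicit quasi-commutation pattern among the $a_{m,n}$ and the proof that it is both necessary and sufficient. The paper's answer is concrete --- $a_{mn}v_m=q^2v_ma_{mn}$; $a_{mn}a_{mn'}=q^2a_{mn'}a_{mn}$ for $n>n'$; $a_{mn}a_{m-1,n'}=q^2a_{m-1,n'}a_{mn}$ for $n\le n'$; all other pairs commute --- and the proof is not an entrywise RTT expansion or a BCH-type computation. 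Instead it runs through the quantum cluster variables $x_{i,i+j}$ (initial minors): one first shows that under the claimed relations each $x_{i,i+j}$ collapses to a single ordered monomial $\bigl(\prod_m\prod_n a_{m+n-1,n}\bigr)\prod_k v_k$ (Lemma \ref{lem-order}), then counts $q^2$ factors to get $x_{i,i+j}x_{k,k+l}=q^{2P(i,j;k,l)}x_{k,k+l}x_{i,i+j}$ (Lemma \ref{lem-power}), and finally uses the recursion $P(i,j;k,l)=P(i,j-1;k,l-1)$ (Lemma \ref{lem-power2}) together with the $GL_q(N-1)$ structure of the minor $x_{N-1,N}$ to pin down, by induction on $N$, the relations of the new variables $a_{N-1,*}$. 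Your proposed induction on the upper-left $(N-1)\times(N-1)$ block is in the same spirit, but without the monomiality of the cluster variables you have no handle on the ``finite family of commutation identities'' you promise to check, and the wiring-diagram heuristic you invoke does not by itself yield the asymmetric $n\le n'$ condition in the third relation.

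Two further points need repair. First, positivity and essential self-adjointness of the entries $z_{ij}$ is \emph{not} automatic for a sum of unbounded positive operators; the paper needs Volkov's lemma (that $u+v$ is positive essentially self-adjoint with $(u+v)^{1/b^2}=u^{1/b^2}+v^{1/b^2}$ when $uv=q^2vu$) \emph{together with} the nontrivial observation that the summands $S_{i,t}$ of $z_{ij}$ pairwise $q^2$-commute in a consistent linear order (Corollary \ref{cor1}). Second, for the modular double the substitution $b\mapsto b^{-1}$ handles single Weyl generators, but the statement $\widetilde{z_{ij}}=z_{ij}^{1/b^2}$ for the composite entries again rests on the same summation lemma, not on functional calculus alone. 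Your count of $\binom{N}{2}$ tori for the Borel part is also not justified (the paper's explicit embedding uses $(N^2+N-4)/2$ copies, with minimum $\lfloor N^2/4\rfloor$ determined by the signature of the associated skew form), though this is a minor issue since any sufficiently large realization would do for the existence claim.
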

As a corollary of the calculations, we also have the following results:
\begin{Thm}
\begin{itemize}
\item[(1)] There is an embedding of $GL_q^+(N,\R)$ into the algebra of $\lfloor\frac{N^2}{2}\rfloor$ $q$-tori generated by $\{u_i,v_i\}$ satisfying $u_iv_i=~q^2v_iu_i$, which are realized by
\Eq{u_i=e^{2\pi bx_i},\tab v_i=e^{2\pi bp_i}.}
\item[(2)] The quantum cluster variables $x_{ij}$, defined by the quantum determinant of the initial minors, can be represented as products of the variables $\{b_{m,n}, U_k, a_{m,n}\}$, and hence they quasi-commute with each other.
\end{itemize}
\end{Thm}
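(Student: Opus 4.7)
The plan is to leverage Theorem 1 to bootstrap both statements. Since Theorem 1 already exhibits $N^2$ operators $\{b_{m,n},U_k,a_{m,n}\}$ that pairwise either commute or $q^2$-commute, the first task for part (1) is to organize them into Weyl pairs. I would build the graph whose vertices are the $N^2$ generators and whose edges record the $q^2$-commutation relations, and then choose a matching (up to isolated vertices, which account for the parity of $N^2$ and the central directions in the diagonal torus) that groups them into $\lfloor N^2/2\rfloor$ Weyl pairs $(u_i,v_i)$ with $u_iv_i=q^2v_iu_i$. The natural candidates come from the Gauss-Lusztig factorization itself: each $a_{m,n}$ tends to $q^2$-commute with a corresponding $b_{m',n'}$ once one tracks the braiding through the Lusztig word for $w_0$, with the $U_k$ serving as the connecting nodes linking the two unipotent halves. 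Declaring $u_i=e^{2\pi bx_i}$ and $v_i=e^{2\pi bp_i}$ on $L^2(\R^{\lfloor N^2/2\rfloor})$ with the canonical $[x_i,p_i]=(2\pi\i)\inv$ then realizes each pair by positive essentially self-adjoint operators, while the remaining commuting generators embed as additional positive Laurent monomials, yielding the embedding of $GL_q^+(N,\R)$ into the algebra of $q$-tori.

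For part (2), the idea is to use the Gauss-Lusztig factorization itself to compute the quantum determinants of the initial minors. Writing a generic element of $GL_q^+(N,\R)$ as $U^-_{>0}\cdot T_{>0}\cdot U^+_{>0}$ from Theorem 1, an initial minor along the top row only sees the diagonal and upper unipotent factor, and dually for a leftmost-column minor, because the unit-diagonal unipotent on the other side contributes trivially to the relevant submatrix expansion. I would then proceed by induction on the size of the minor, showing that after telescoping cancellations between the $x_{i_k}(a_k)$ elementary factors only one monomial in the variables $\{b_{m,n},U_k,a_{m,n}\}$ survives in the quantum determinant. The input here is the Berenstein-Fomin-Zelevinsky parametrization by initial minors from \cite{BFZ} adapted to the quantum setting: the classical computation already yields a single monomial, and because all of our quantum variables quasi-commute, expanding in any fixed order introduces only powers of $q$ that can be absorbed into normal ordering without producing genuine sums. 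Once each $x_{ij}$ is written as such a monomial, the quasi-commutation of the cluster variables is an immediate consequence of the quasi-commutation of the generators.

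The main obstacle is twofold. First, one must verify that the quasi-commutation graph assembled from the explicit relations of Theorem 1 does admit a consistent pairing into disjoint Weyl pairs; a priori there could be ``odd cycles'' obstructing such a matching, so one has to exploit the bipartite-like structure coming from the $\{b_{m,n}\}$ versus $\{a_{m,n}\}$ split, together with the fact that the $U_k$'s $q^2$-commute with generators on exactly one side of the factorization. Second, for the initial-minor computation, the tricky part is controlling the ordering of noncommuting entries when expanding the quantum determinant: the claim that only a single monomial survives needs a careful normal-ordering argument at each step, cross-checked against the classical limit $q\to 1$ where the Berenstein-Fomin-Zelevinsky monomial formula must be recovered. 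I would handle this by fixing a canonical order in which to expand, reducing the product inductively via row operations adapted to the Lusztig word, and tracking the accumulated powers of $q$ at each reduction.
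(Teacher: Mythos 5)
Your treatment of part (2) is essentially the paper's: the initial minors only involve $T_{>0}U_{>0}^+$ (resp.\ the lower half), and Lemma \ref{lem-order} carries out exactly the cancellation you sketch --- each entry $z_{i,i+j}$ is a sum of monomials $S_{i,t}$, every term of the quantum determinant is brought to a unique ``maximal ordering'', and the classical signed cancellation then kills everything except the lexicographically minimal term, leaving the single monomial $\left(\prod a_{m+n-1,n}\right)\prod v_k$. Your phrase about powers of $q$ being ``absorbed into normal ordering'' is where the real work sits (one must check the ordering is \emph{unique} per multiset of factors so that the $\pm$ cancellation is exact), but the strategy is the right one, and quasi-commutation of the $x_{ij}$ then follows as you say (the paper makes the exponent explicit in Lemma \ref{lem-power}).

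Part (1) has a genuine gap. You propose to pair the $N^2$ generators \emph{themselves} into $\lfloor N^2/2\rfloor$ Weyl pairs via a matching on the $q^2$-commutation graph. This cannot work: assigning disjoint pairs $(a_{mn},v_m)\mapsto(u_i,v_i)$ forces all cross-relations between distinct pairs to be trivial, whereas $a_{mn}$ must $q^2$-commute with every $a_{mn'}$, $n'<n$, and with $a_{m-1,n'}$ for $n'\le n$; the relation structure is not a disjoint union of edges, so no matching (bipartite or otherwise) can reproduce it. Your proposed $a$--$b$ pairing also rests on a false premise: by item (3) of the Main Theorem the $a_{mn}$ \emph{commute} with the $b_{m'n'}$, so the two unipotent halves live on commuting sets of tori --- that is precisely why the count doubles from $\lfloor N^2/4\rfloor$ for $T^+U^+$ to $\lfloor N^2/2\rfloor$ for the whole group. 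The paper's argument (Theorem \ref{qtori2}) is symplectic linear algebra, not graph matching: encode the $q^2$-commutations as a skew-symmetric integer matrix on the $\tfrac{N^2+N-2}{2}$ generators of $T^+U^+$, compute its rank, reduce it to standard $2\times 2$ symplectic blocks, and realize each generator as a \emph{Laurent monomial} in the canonical pairs $u_i=e^{2\pi b x_i}$, $v_i=e^{2\pi b p_i}$, with powers of $q$ inserted to preserve positivity and self-adjointness (see the explicit table for $N\le 6$, where e.g.\ $a_{54}=U_5V_6V_7U_8V_8V_9U_9$ is a product of seven torus generators, not a single one). The discrepancy between $N^2$ generators and $\lfloor N^2/2\rfloor$ tori is accounted for by the kernel of the skew form (central combinations), not by isolated vertices of a matching.
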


From the main Theorem, we can extend the quantum group $GL_{q\til[q]}^+(N,\R)$ into the $C^*$-algebraic setting by giving an operator norm to each element which is represented by integrals of continuous complex powers of the generators, completely analogous to the $N=2$ case. We can also give an $L^2$ completion and define the Hilbert space $L^2(GL_{q\til[q]}^+(N,\R))$. Then it is natural to conjecture its decomposition under the regular representation of the modular double $U_{q\til[q]}(\sl(N,\R))$ into the direct integral of positive principal series representations constructed in \cite{FI}, in analogy to the decompositions of $L^2(GL_{q\til[q]}^+(2,\R))$.

The Gauss decomposition for a general quantum group is definitely not new \cite{DKS, WZ}. However the usual notion in the context of $GL_q(N)$ is just decomposing the quantum group into a product of lower and upper triangular matrices, and the quantum Pl\"{u}cker relations between the coordinates are studied. Though this approach is a natural consideration, the relations involved are quite ad hoc, and furthermore it has no way to be generalized to the positive setting and its representation is rather unclear. Therefore we name our decomposition the \emph{Gauss-Lusztig decomposition} to distinguish it from the standard approach, where we decompose our quantum group into products of elementary matrices bearing a quantum variable, so that the positivity and their representations are manifest.

Finally we also remark that in \cite{BZ,FG2}, the notion of quantum cluster algebra is studied, where quasi-commuting cluster variables are considered, and the $q$-commuting relations are compatible with the algebraic framework. However its relations to the parametrization of $GL_q(N)$ is not very explicit, and its representation by the canonical $q$-tori $\{e^{2\pi bx},e^{2\pi bp}\}$ is not shown. In this paper, starting from the very definition of a quantum group, we found using new combinatorics method that these cluster variables, quasi-commuting in some complicated powers of $q^2$, are actually decomposed into simpler variables $\{b_{m,n}, U_k, a_{m,n}\}$ that commute only up to a factor of $q^2$, and explicit formula is given for the case $GL_q(N)$. The $q$-commutations we found explicitly are closely related to the Poisson structure of the cluster $\mathcal{X}$-variety considered in \cite{FG1}. We note that in this paper we only use a single choice of cluster variables given by the initial minors. A more thorough understanding of the theory of quantum cluster algebra in the context of quantum groups should be possible by also considering explicitly the quantum exchange relations to other clusters, corresponding to different parametrization of the maximal element $w_0$ explained in Theorem \ref{GLcomtrans}.

The paper is organized as follows. In Section \ref{sec:glq2} we describe in detail the Gauss decomposition for $GL_q(2)$ studied in \cite{Ip}. In Section \ref{sec:gln} we describe the Lusztig parametrization of the totally positive matrix in $GL^+(N,\R)$, and the description of the cluster variables defined in \cite{BFZ}. Then we introduce the definition of $GL_q(N)$ in Section \ref{sec:glqn}, and using certain combinatorics methods, we find in Section \ref{sec:gauss} the quantum relations between the variables of the Gauss-Lusztig decomposition. In Section \ref{sec:qtori} we constructed the representation of these quantum variables using $N^2-2$ quantum tori, and also present an example demonstrating the minimal representation using only $\lfloor \frac{N^2}{2}\rfloor$ tori. Finally using the quantum tori realization, in Section \ref{sec:posmod} we define the positive quantum group $GL_q^+(N,\R)$, and describe its relation to the modular double, and in Section \ref{sec:L2} a possible construction of an $L^2(GL_{q\til[q]}^+(N,\R))$ space.

\textbf{Acknowledgment.} This work was supported by World Premier International Research Center Initiative (WPI Initiative), MEXT, Japan. I would like to thank my advisor Igor Frenkel for valuable comments and suggestions to this work. I would also like to thank Alexander Goncharov for pointing out Remark \ref{clusterX} in relation to the cluster $\mathcal{X}$-varieties.

\section{Gauss decomposition for $GL_q(2)$}\label{sec:glq2}

We recall the definition of $GL_q(2)$ used in \cite{FJ,Ip}. It is a rescaled version of the usual definition of $GL_q(2)$ \cite{CP}, and has an advantage of acting naturally on the standard $L^2(\R)$ space, due to the rescaled quantum determinant \eqref{z6} which resembles the classical formula without any $q$ factor.

\begin{Def}
Let $q\in\C$ which is not a root of unity. We define $GL_q(2)$ to be the bi-algebra generated by $z_{11},z_{12},z_{21}$ and $z_{22}$ subjected to the following commutation relations:
\begin{eqnarray}
\label{z1}z_{11}z_{12}&=&z_{12}z_{11},\\
z_{21}z_{22}&=&z_{22}z_{21},\\
z_{11}z_{21}&=&q^2z_{21}z_{11},\\
z_{12}z_{22}&=&q^2z_{22}z_{12},\\
z_{12}z_{21}&=&q^2z_{21}z_{12},\\
\label{z6}det_q:=z_{11}z_{22}-z_{12}z_{21}&=&z_{22}z_{11}-z_{21}z_{12}.
\end{eqnarray}
with co-product $\D$ given by
\Eq{\D(z_{ij})=\sum_{k=1,2}z_{ik}\ox z_{kj},}
and co-unit $\e$ given by
\Eq{\e(z_{ij})=\d_{ij}.}
\end{Def}
It is possible to define the antipode $\c$ by adjoining the inverse element $\det_q\inv$, giving $GL_q(2)$ a Hopf algebra structure. However we will not use the antipode in this paper.
\begin{Rem}
It is often convenient to define the matrix of generators
$$Z:=\veca{z_{11}&z_{12}\\z_{21}&z_{22}},$$
then the co-product can be rewritten as standard matrix multiplication:
\Eq{\D\veca{z_{11}&z_{12}\\z_{21}&z_{22}}=\veca{z_{11}&z_{12}\\z_{21}&z_{22}}\ox\veca{z_{11}&z_{12}\\z_{21}&z_{22}}.}
\end{Rem}
In the papers \cite{Ip,Pu}, the Gauss decomposition of $GL_q(2)$ is studied. It can be decomposed uniquely into
\Eq{\veca{z_{11}&z_{12}\\z_{21}&z_{22}}=\veca{u_1&0\\v_1&1}\veca{1&u_2\\0&v_2},}
where the Weyl pairs $\{u_i,v_i\}_{i=1,2}$ are non-commutative variables satisfying 
\Eq{u_iv_i&=q^2v_iu_i,\\u_iv_j&=v_ju_i\mbox{\;\;\; for $i\neq j$.}} 
Denote by $\C[\T_q]$ the algebra of quantum torus:
\Eq{\C[\T_q]:=\C\<u, v\>/(uv=q^2vu)}
Then in particular, we have an embedding of the algebra $GL_q(2)$ into the algebra of quantum tori:
\Eq{GL_q(2)\to \C[\T_q]^{\ox 2}}

Later on when we specify $|q|=1$, we will introduce a star structure so that the generators $u_i,v_i$ are self-adjoint. The representations of this algebra will be discussed in detail in Section \ref{sec:posmod}.

In order to generalize this construction to the higher rank, it turns out that it is better to write it in the form:
\Eq{\label{Gauss}\veca{1&0\\v_1&1}\veca{u_1&0\\0&1}\veca{1&0\\0&v_2}\veca{1&u_2\\0&1}}
$$=\veca{u_1&0\\v_1u_1&1}\veca{1&u_2\\0&v_2},$$
which of course still satisfies the quantum relations. Finally we note that the quantum determinant $det_q$ quasi-commutes with all other variables. It is this property that motivates us to study the Gauss decomposition for $GL_q(N)$ not using the standard coordinates, but using the ``cluster variables" which we will introduce in the next section.


\section{Parametrization of $GL^+(N,\R)$}\label{sec:gln}

In classical group theory, the total positive part $GL^+(N,\R)$ is the semi-subgroup of $GL(N,\R)$ so that all the entries are positive, and all the minors, including the determinant, are also positive. There are in general two equivalent ways to realize the total positive semi-group. In \cite{Lu}, a parametrization using the Gauss decomposition is found:
\Eq{G=U_{>0}^- T_{>0} U_{>0}^+,}
where $T_{>0}$ is the diagonal matrix with positive entries $u_i$, the positive unipotent semi-subgroup $U_{>0}^+$ (and similarly for $U_{>0}^-$) is decomposed as
\Eq{\label{u0} U_{>0}^+=\prod_{k=1}^{m}e^{a_k E_{i_k}}=\prod_{k=1}^{m}(I_N+a_k E_{i_k,i_k+1}),}
where $E_{i,i+1}$ is the matrix with 1 at the position $(i,i+1)$ and 0 otherwise, and the $i_k$'s correspond to the decomposition of the longest element $w_0$ of the Weyl group $W=S_{N-1}$: 
\Eq{w_0=s_{i_1}s_{i_2}...s_{i_m}.}
Using the canonical decomposition for $w_0$:
\Eq{w_0=s_{N-1}s_{N-2}...s_2s_1s_{N-1}s_{N-2}...s_2s_{N-1}s_{N-2}...s_3...s_{N-1},}
where $s_k=(k, k+1)$ are the standard transpositions, $U_{>0}^+$ can be expressed in the form:
\footnotesize
\Eq{\label{U+}\veca{1&a_{1,1}&0&0&0\\0&1&a_{2,1}&0&0\\0&0&1&\ddots&0\\0&0&0&\ddots&a_{N-1,1}\\0&0&0&0&1}\veca{1&0&0&0&0\\0&1&a_{2,2}&0&0\\0&0&1&\ddots&0\\0&0&0&\ddots&a_{N-1,2}\\0&0&0&0&1}\cdots\veca{1&0&0&0&0\\0&1&0&0&0\\0&0&1&\ddots&0\\0&0&0&\ddots&a_{N-1,N-1}\\0&0&0&0&1}.}
\normalsize
The labeling is clear: $a_{m,n}$ is the entry at the $m$-th row and appears the $n$-th time from the left.
Similarly, $U_{>0}^-$ is given by the transpose of the form of $U_{>0}^+$, i.e.
\footnotesize
\Eq{\label{U-}\veca{1&0&0&0&0\\0&1&0&0&0\\0&0&1&0&0\\0&0&\ddots&\ddots&0\\0&0&0&b_{N-1,1}&1}\cdots\veca{1&0&0&0&0\\0&1&0&0&0\\0&b_{2,1}&1&0&0\\0&0&\ddots&\ddots&0\\0&0&0&b_{N-1,N-2}&1}\veca{1&0&0&0&0\\b_{1,1}&1&0&0&0\\0&b_{2,2}&1&0&0\\0&0&\ddots&\ddots&0\\0&0&0&b_{N-1,N-1}&1}.}
\normalsize

Under this parametrization, Berenstein et al. \cite{BFZ} studied the parametrization by cluster variables, in this case corresponds to the ``initial minors" of the matrix. These are the determinants of the square minors which start from either the top row or the leftmost column. More precisely, a matrix $g\in GL(N,\R)$ is totally positive if and only if all its initial minors are strictly positive. Furthermore, the initial minor can be expressed uniquely as a product of the parameters $a_{ij},b_{ij}$ and $u_i$, hence giving a 1-1 correspondence between the parametrizations. 

In the study of the quantum Gauss decomposition, it turns out that it is just enough to look at $T_{>0}U_{>0}^+$. Let us first consider $U_{>0}^+$. Denote by $x_{ij}$, $1\leq i<j\leq N$, the determinant of the initial minor with the lower right corner at the entry $(i,j)$. Following \cite{BFZ}, there is an explicit relation between $x_{ij}$ and $a_{ij}$:
\begin{Prop} We have
\Eq{a_{i,N-j}&=\frac{x_{j,i+1}x_{j-1,i-1}}{x_{j,i}x_{j-1,i}},\\
x_{i,i+j}&=\prod_{m=1}^i \prod_{n=1}^j a_{m+n-1,n}.}
Here we denote by $x_{i,i}=x_{i,0}=x_{0,j}=1$.
\end{Prop}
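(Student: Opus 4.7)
The plan is to establish the product formula for $x_{i,i+j}$ first and then derive the ratio formula for $a_{i,N-j}$ as a purely algebraic corollary. The natural tool for the product formula is the Lindstr\"om--Gessel--Viennot (LGV) lemma applied to the planar network associated with the factorization displayed for $U_{>0}^+$, which I shall write schematically as $U_{>0}^+ = M_1 M_2 \cdots M_{N-1}$.

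First I would set up the planar network: draw $N-1$ vertical slices, with $N$ nodes on each slice (one at each level $1, \ldots, N$). Inside slice $n$, place a horizontal edge of weight $1$ at every level, together with, for each $m = n, n+1, \ldots, N-1$, an oriented ``rising'' edge of weight $a_{m,n}$ going from level $m+1$ on the left to level $m$ on the right. Attach a source $s_i$ at level $i$ on the far left and a sink $t_j$ at level $j$ on the far right. A direct check, matching $(AB)_{ik} = \sum_\ell A_{i\ell} B_{\ell k}$ against the definitions of the $M_n$, identifies $(U_{>0}^+)_{ij}$ with the total weight of directed paths from $s_i$ to $t_j$.

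Next, the LGV lemma identifies the initial minor $x_{i,i+j}$ with the weighted sum over non-intersecting path families from $\{s_1,\ldots,s_i\}$ to $\{t_{j+1},\ldots,t_{i+j}\}$. The combinatorial heart of the proof is to show that exactly one such family exists: the path from $s_k$ to $t_{k+j}$ must rise by $j$ levels, and the fact that the available rising edges in slice $n$ exist only at levels $\geq n$ forces each path to rise as early as possible. Processing slices left-to-right and using non-intersection as the inductive constraint, one finds that the path from $s_k$ uses exactly the rising edges $a_{k,1}, a_{k+1,2}, \ldots, a_{k+j-1,j}$ in slices $1,2,\ldots,j$. Multiplying weights across all $i$ paths yields the claimed $\prod_{m=1}^i \prod_{n=1}^j a_{m+n-1,n}$.

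For the ratio formula, I would then substitute the product formula into each of the four initial minors $x_{j,i+1}$, $x_{j-1,i-1}$, $x_{j,i}$, $x_{j-1,i}$ and track the exponent of each generator $a_{r,s}$. Each such minor contributes the indicator function of a rectangular region in the $(r,s)$-lattice, and the four rectangles differ pairwise by a single border row or column; the alternating sum telescopes to the indicator of a single lattice point, yielding the target $a$-entry, with the boundary conventions $x_{i,i} = x_{i,0} = x_{0,j} = 1$ handling the degenerate cases. The main obstacle is the rigorous uniqueness of the non-intersecting family: although the extremal path system is visually evident from the staircase geometry, eliminating all competing families requires an explicit crossing argument, exploiting the fact that the rising edges in slice $n$ exclude the top $n-1$ levels, so any ``lazy'' path in an early slice necessarily collides with the path above it further to the right. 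Once this combinatorial lemma is secured, both formulas follow with little additional work.
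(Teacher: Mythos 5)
Your plan is sound, and it is essentially the proof of record: the paper itself does not prove this Proposition but quotes it from \cite{BFZ}, and the planar-network/LGV argument you outline is exactly how such factorization formulas are established there. The uniqueness of the non-intersecting family, which you flag as the main obstacle, is in fact cheap: for the path from $s_1$ to $t_{j+1}$ the edge-availability constraint forces the $l$-th jump time to satisfy $t_l\le l$, which together with $t_1<\cdots<t_j$ gives $t_l=l$ with no reference to the other paths; your collision argument then pins down the remaining paths by downward induction. Note also that the paper's own (quantum) counterpart of the second identity, Lemma \ref{lem-order}, proceeds differently: it writes each entry $z_{i,i+j}$ in closed form as a sum over increasing sequences and shows that in the Laplace expansion all monomials cancel except the lexicographically minimal one. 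Your LGV route packages the same cancellation more cleanly in the commutative setting, but it does not by itself produce the ordering information that the noncommutative version needs.

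Two concrete warnings. First, carrying out your telescoping: $x_{r,c}$ contributes the index set $\{(p,q):1\le q\le c-r,\ q\le p\le q+r-1\}$, and the alternating sum of the four sets attached to $x_{j,i+1}x_{j-1,i-1}/(x_{j,i}x_{j-1,i})$ collapses to the single point $(i,\,i+1-j)$, i.e.\ to $a_{i,\,i+1-j}$ (valid for $1\le j\le i$), \emph{not} to $a_{i,N-j}$ as printed; the two agree only when $i=N-1$, and for instance $N=3$, $i=1$, $j=2$ makes the printed right-hand side involve the undefined minor $x_{2,1}$. The statement as displayed has an index slip, so do not contort your computation to match it --- your method, done carefully, yields the corrected version. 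Second, check the orientation of your rising edges: as literally written (from level $m+1$ on the left to level $m$ on the right) the path matrix of your network is the transpose of $U_{>0}^+$, since the $n$-th factor has its nonzero off-diagonal entries at positions $(m,m+1)$; this is only a labeling convention, but left uncorrected it swaps rows and columns in the LGV step.
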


The above relations can be expressed schematically by the following diagram:

$$\begin{tikzpicture}[baseline={([yshift=3ex]a33.base)}]
\node (a51) at (0, 0) {$a_{51}$};
\node (a41) at +(30: 0.866) {$a_{41}$};
\node (a31) at +(30: 1.732) {$a_{31}$};
\node (a21) at +(30: 2.598) {$a_{21}$};
\node (a11) at +(30: 3.464) {$a_{11}$};
\node (a52) at (1.5, 0) {$a_{52}$};
\node (a42) at ($(a41)+(1.5,0)$) {$a_{42}$};
\node (a32) at ($(a31)+(1.5,0)$) {$a_{32}$};
\node (a22) at ($(a21)+(1.5,0)$) {$a_{22}$};
\node (a53) at (3, 0) {$a_{53}$};
\node (a43) at ($(a42)+(1.5,0)$) {$a_{43}$};
\node (a33) at ($(a32)+(1.5,0)$)  {$a_{33}$};
\node (a54) at (4.5, 0) {$a_{54}$};
\node (a44) at ($(a43)+(1.5,0)$) {$a_{44}$};
\node (a55) at (6, 0) {$a_{55}$};
\draw ([yshift=1ex]a11.north)--([xshift=1ex]a44.east)--([yshift=-1ex]a54.south)--([xshift=-1ex]a21.west)--cycle;
\node at ([yshift=3ex]a21.north) {$i$};
\node at ([yshift=3.2ex]a33.north) {$j$};
\node at (3,-0.75) {Figure 1: The cluster $x_{i,i+j}$ for $i=2,j=4$};
\end{tikzpicture}
$$

As in the $N=2$ case, we split the diagonal subgroup $T_{>0}$ into two halves:
\Eq{\label{TT}T_{>0}=T_{>0}^-T_{>0}^+:=\veca{u_1&0&0&0&0\\0&u_2&0&0&0\\0&0&\ddots&0&0\\0&0&0&u_{N-1}&0\\0&0&0&0&1}\veca{1&0&0&0&0\\0&v_1&0&0&0\\0&0&v_2&0&0\\0&0&0&\ddots&0\\0&0&0&0&v_{N-1}},}
and just consider the $v$ variables for the decomposition of the upper triangular part. Then the formulas in $T_{>0}^+U_{>0}^+$ for $a_{i,j}$ stay the same, while those for $x_{i,j}$ are modified as follows:
\Eq{x_{i,i+j}=\left(\prod_{m=1}^i \prod_{n=1}^j a_{m+n-1,n}\right)\prod_{k=1}^{i-1} v_k.}


\section{Definition of $GL_q(N)$}\label{sec:glqn}

 The quantum group $GL_q(N)$ is defined by the following relations involving $GL_q(2)$:
\begin{Def} $GL_q(N)$ is the Hopf algebra generated by $\{z_{ij}\}_{i,j=1}^N$, such that for every $1\leq i<i'\leq N,1\leq j<j'\leq N$, the minor
\Eq{\veca{z_{ij}&z_{ij'}\\z_{i'j}&z_{i'j'}}}
is a copy of $GL_q(2)$, i.e. it satisfies the corresponding relations \eqref{z1}-\eqref{z6}. Furthermore, the Hopf algebra structure is given by the same classical formula:
\Eq{\D(z_{ij})&=\sum_{k=1}^N z_{ik}\ox z_{kj},\\
\e(z_{ij})&=\d_{ij}.}
Again the antipode $\c$ can be defined by adjoining $det_q\inv$ defined below, but we will not use it in the present paper.

\end{Def}

For $GL_q(N)$, the quantum determinant is again defined using the classical formula (with no $q$ involved):
\begin{Def} We define the quantum determinant as
\Eq{\label{detn} det_q = \sum_{\s\in S_N} (-1)^\s z_{1,\s(1)}...z_{N,\s(N)},}
where $S_N$ is the permutation group. 
\end{Def}

Then it follows from \eqref{z6} and an induction argument that $det_q$ does not depend on the order of the row index, provided that all the monomials have the same order of row index.

As in the case of $GL_q(2)$, we can conveniently define the matrix of generators 
\Eq{Z:=\big(z_{ij}\big)_{i,j=1}^N}
and simply call it the $GL_q(N)$ matrix. Then from the co-associativity of the co-product $\D$, 
\Eq{\D(Z)=Z\ox Z,} we notice the following property:
\begin{Prop} If $X$ and $Y$ are $GL_q(N)$ matrices such that the generators of $X$ commute with those from $Y$, then the matrix product $G=XY$ is again a $GL_q(N)$ matrix.
\end{Prop}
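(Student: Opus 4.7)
The proposition is essentially a restatement of the fact that the coproduct $\D$ is an algebra map, combined with the universal property of $A:=GL_q(N)$ as the algebra generated by symbols $z_{ij}$ modulo the $2\x 2$-minor relations \eqref{z1}--\eqref{z6}. First I would invoke this universal property: since the entries of $X$, respectively $Y$, satisfy the same defining relations as the $z_{ij}$'s, there exist algebra homomorphisms $\varphi_X:A\to B$ and $\varphi_Y:A\to B$ into the ambient algebra $B$ containing both sets of generators, determined by $z_{ij}\mapsto x_{ij}$ and $z_{ij}\mapsto y_{ij}$ respectively. Because the images of $\varphi_X$ and $\varphi_Y$ commute elementwise in $B$, they assemble into a single algebra homomorphism $\psi:A\ox A\to B$, $a\ox b\mapsto \varphi_X(a)\,\varphi_Y(b)$.

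Next I would compose with $\D:A\to A\ox A$, which is itself an algebra homomorphism by the bialgebra axiom, to obtain
\Eq{\Phi:=\psi\circ\D:\,A\to B,\tab z_{ij}\longmapsto \sum_{k=1}^N x_{ik}y_{kj}=g_{ij}.}
Since $\Phi$ is an algebra homomorphism, the entries $g_{ij}$ automatically satisfy every relation that the $z_{ij}$'s satisfy in $A$; in particular every $2\x 2$ minor of $G=XY$ is a copy of $GL_q(2)$, so $G$ is a $GL_q(N)$ matrix, as desired.

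The only genuine input is that $\D$ is a well-defined algebra map on $A$, which is already part of the bialgebra structure built into the definition of $GL_q(N)$. A more pedestrian alternative would be to verify the six $GL_q(2)$ relations \eqref{z1}--\eqref{z6} directly on the four entries of an arbitrary $2\x 2$ minor of $G$: each relation expands into a double sum over the internal index $k$ in $g_{ij}=\sum_k x_{ik}y_{kj}$, and after commuting all $y$'s past all $x$'s it collapses into a combination of $GL_q(2)$ relations for the various $2\x 2$ minors of $X$ and of $Y$ taken separately. The determinant identity \eqref{z6} is the most involved of these and would be the main computational obstacle if one prefers to bypass the Hopf-algebraic shortcut; everything else is an immediate single-row or single-column check.
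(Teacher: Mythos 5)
Your argument is correct and is precisely the paper's own (tacit) justification: the paper derives the proposition from the fact that the coproduct is an algebra homomorphism written in matrix form, $\D(Z)=Z\ox Z$, which is exactly your composition $\psi\circ\D$ built from the universal property of $GL_q(N)$ and the commutation of the two sets of generators. You have simply spelled out the details that the paper leaves implicit, so no further comment is needed.
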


Hence in order to find a Gauss decomposition $GL_q(N)=XY$ where $X$ is lower triangular and $Y$ is upper triangular, it suffices to find the corresponding matrix that satisfies the quantum relations (that any matrix can be expressed in this form is proved, for example, in \cite{DKS}). We will do this by employing the construction using the parametrizations of the totally positive matrices.


\section{Gauss-Lusztig decomposition of $GL_q(N)$} \label{sec:gauss}

Let $T^+$ and $U^+$ be given by the same matrices as in \eqref{U+} and \eqref{TT}, but instead with formal non-commuting variables $v_m, a_{mn}$ for $1\leq n\leq m\leq N-1$. We also define the cluster variables $x_{ij},1\leq i<j\leq N$ to be the quantum determinant of the initial minors of the matrix product $Z=T^+U^+$ by the determinant formula \eqref{detn}.

Then we can state our main results:
\begin{Thm}\label{main} The product $Z=T^+U^+$ is a $GL_q(N)$ matrix if and only if we have the following $q$-commutation relations between the variables given by:
\begin{itemize}
\item $a_{mn}v_m=q^2v_m a_{mn}$ for all $n$,
\item $a_{mn}a_{mn'}=q^2a_{mn'}a_{mn}$ for $n>n'$,
\item $a_{mn}a_{m-1,n'}=q^2 a_{m-1,n'}a_{mn}$ for $n\leq n'$,
\item commute otherwise.
\end{itemize}

Furthermore the variables $x_{ij}$ can be written as
\begin{eqnarray}
\label{order} x_{i,i+j}&=&\left(\prod_{m=1}^i \prod_{n=1}^j a_{m+n-1,n}\right)\prod_{k=1}^{i-1} v_k,\\
&=&(a_{11}a_{22}a_{33}...)(a_{21}a_{32}a_{43}...)...(...a_{i+j-1,j})(v_1v_2...v_{i-1})
\end{eqnarray}
in this particular order. Finally for \emph{every} $GL_q(N)$ matrix, the commutation relations between the variables $x_{ij}$ are given by
\Eq{\label{power}x_{i,i+j}x_{k,k+l}=q^{2P(i,j;k,l)} x_{l,k+l}x_{j,i+j},}
where for $j\leq l$, 
\Eq{P(i,j;k,l)=\#\{m,n| l+2\leq m+n\leq k+l+1, 1\leq m\leq i, 1\leq n\leq j\}\nonumber\\
-\#\{m,n|1\leq m+n\leq i,1\leq m\leq k,1\leq n\leq l\}}
and \Eq{P(k,l;i,j)=-P(i,j;k,l).}
\end{Thm}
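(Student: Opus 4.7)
The plan is to establish the three claims in sequence: first the $q$-commutations among the generators $\{a_{m,n}, v_m\}$, then the monomial formula for $x_{i,i+j}$, and finally the $q$-commutation relations among the cluster variables. Throughout I would exploit the explicit expansion of the matrix product $Z = T^+U^+$: since $T^+$ is diagonal with $(T^+)_{ii} = v_{i-1}$ (with $v_0 = 1$), one has $z_{ij} = v_{i-1}\cdot (U^+)_{ij}$, and $(U^+)_{ij}$ is a sum over paths in the Lusztig planar network (Figure 1) running from the $i$-th input to the $j$-th output through the sequence of elementary factors.

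For Step 1, I would extract the necessary commutations by applying the $GL_q(2)$ relations \eqref{z1}--\eqref{z6} to the smallest $2\times 2$ minors of $Z$ that isolate a given pair of generators. For instance, a minor in rows $(m, m+1)$ together with adjacent columns couples $a_{m,n}$ to a single neighbor ($a_{m,n'}$, $a_{m-1,n'}$, or $v_m$) and forces the claimed $q^2$-commutation; the ``commute otherwise'' cases either fail to co-occur in any $2 \times 2$ minor with the structure forcing a nontrivial relation, or are verified directly. For sufficiency I would proceed by induction on the number of elementary factors: extending a matrix which already has the $GL_q(N)$ structure by one more factor $x_i(a_{m,n})$ and one more diagonal entry preserves the structure provided the new generator has the stated $q$-commutations with the existing entries, reducing the check to the finitely many newly created $2 \times 2$ minors.

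For Step 2, the initial minor $x_{i,i+j}$ is the $i \times i$ submatrix of $Z$ on rows $1, \ldots, i$ and columns $j+1, \ldots, i+j$. Using \eqref{detn} with a fixed row ordering, its quantum determinant a priori expands into $i!$ signed monomials coming from systems of paths in the Lusztig network. My plan is to adapt the Lindstr\"om--Gessel--Viennot argument to the quantum setting: in the signed sum, terms corresponding to crossing path systems cancel pairwise via a sign-reversing involution (the swap of a pair of crossing paths at their first crossing point), with the $q^2$-factors generated by swapping $a$-generators exactly absorbed by the commutation relations from Step 1. Because of the particular column choice $j+1, \ldots, i+j$ defining the initial minor, there is a unique non-crossing path system, whose weight read off along the anti-diagonals of Figure 1 is precisely the claimed monomial $\left(\prod_{m=1}^{i}\prod_{n=1}^{j} a_{m+n-1,n}\right)\prod_{k=1}^{i-1} v_k$.

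For Step 3, with the monomial formula in hand, the computation of $x_{i,i+j}\cdot x_{k,k+l}$ reduces to moving each generator of the second monomial past each generator of the first using Step 1, accumulating powers of $q^2$. Each generator pair contributes $0$ or $\pm 2$; classifying which pairs $(m,n)$ versus $(m',n')$ yield $+2$ and which yield $-2$ is a finite enumeration. The constraints $l+2\leq m+n\leq k+l+1$ and $1\leq m+n\leq i$ in the definition of $P(i,j;k,l)$ arise naturally from the anti-diagonal labelling: they single out the generators $a_{m+n-1,n}$ of the first monomial whose anti-diagonal position triggers an $a_{m,n}a_{m,n'}$-type or $a_{m,n}a_{m-1,n'}$-type relation when crossed with those of the second. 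Antisymmetry $P(k,l;i,j) = -P(i,j;k,l)$ is then automatic. I expect Step 2 to be the main obstacle: while sign-reversing involutions are standard in the commutative setting, tracking the $q^2$-factors produced by each path swap and verifying that they exactly cancel the permutation sign requires careful bookkeeping of the commutation exponents along the swap. Once the quantum LGV collapse of Step 2 is established, Steps 1 and 3 become largely mechanical.
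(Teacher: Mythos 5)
Your three-step architecture matches the paper's quite closely in two of its three components: your Step 2 (quantum Lindstr\"om--Gessel--Viennot collapse to the unique non-crossing path system) is exactly the content of the paper's Lemma \ref{lem-order}, where the same cancellation is organized instead by putting each monomial into a ``maximal ordering'' and observing that only the lexicographically minimal term survives; and your Step 3 is the paper's Lemma \ref{lem-power}, a direct count of $\pm 2$ contributions, with the same cancellation between the $v_m$-contributions and one of the counting terms. Where you genuinely diverge is in how the ``if and only if'' is closed. The paper does \emph{not} verify the $GL_q(2)$ relations factor by factor; it inducts on $N$, using the cluster variables as the carrier of information: each new cluster variable $x_{k,N}$ contains exactly one new generator $a_{N-1,N-k}$, so the commutation of the new generators with the old ones is equivalent to the commutation of $x_{k,N}$ with the old clusters, and the latter is pinned down by applying the rank-$(N-1)$ statement to the initial $(N-1)\times(N-1)$ minor together with the purely combinatorial identity $P(i,j;k,l)=P(i,j-1;k,l-1)$ (Lemma \ref{lem-power2}). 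That identity is the pivot of the whole induction and has no counterpart in your plan. Your route buys a more self-contained, relation-by-relation verification; the paper's buys a much shorter closing argument at the price of proving the $P$-identity separately.

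One place where your plan is thinner than it looks is the sufficiency half of Step 1. Right-multiplying by $x_i(a_{m,n})$ replaces column $i+1$ of the running product by $z_{*,i+1}+z_{*,i}\,a_{m,n}$, so it is not only the ``newly created'' $2\times 2$ minors that need checking: every minor meeting column $i+1$, with entries that are now sums of many monomials, must be re-verified, and the relations \eqref{z1}--\eqref{z6} for such sums require the full set of $q$-commutations (this is essentially why the paper routes the argument through Lemma \ref{lem-order} and the closed form \eqref{eachterm} rather than through partial products). Note also that Proposition 4.4 of the paper cannot be invoked here, since consecutive elementary factors $q$-commute rather than commute. This is a repairable underestimate rather than a fatal flaw, but as written the inductive step is not reduced to a finite check of new minors only.
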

\begin{Cor} Let $U^-$ and $T^-$ be defined by \eqref{U-} and \eqref{TT} so that $b_{mn}$ and $u_m$ commute with $a_{mn}$ and $v_m$. Then $\{b_{mn},u_m\inv\}$ satisfies exactly the same relations as $\{a_{mn}, v_m\}$. Let $T=T^-T^+$ be the diagonal matrices with entries $T_k=u_kv_{k-1}$ for $1\leq k\leq N$, where we denote by $v_0=u_N=1$. Then the product
\Eq{GL_q(N):=Z=U^-TU^+}
gives the Gauss-Lusztig decomposition for $GL_q(N)$ parametrized by $N^2$ variables that commute up to a factor of $q^2$. 
\end{Cor}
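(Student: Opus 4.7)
The plan is to derive the corollary from Theorem \ref{main} in three steps: establish the upper half via the main Theorem itself, establish the lower half by a mirrored version of that Theorem, and glue the two together via the product proposition of Section \ref{sec:glqn}.

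Theorem \ref{main} immediately gives that $T^+ U^+$ is a $GL_q(N)$ matrix whose generators $\{a_{mn}, v_m\}$ satisfy the stated $q$-commutation list. The bulk of the work is the mirror statement: $U^- T^-$ is a $GL_q(N)$ matrix once $\{b_{mn}, u_m\inv\}$ are decreed to satisfy the identical relations as $\{a_{mn}, v_m\}$. With both halves in place, the hypothesis that $\{b_{mn}, u_m\}$ commutes with $\{a_{mn}, v_m\}$ puts us in the setting of the Proposition of Section \ref{sec:glqn}, and so the matrix product
\begin{equation*}
U^- T U^+ \;=\; (U^- T^-)(T^+ U^+),
\end{equation*}
with $T = T^- T^+$ diagonal and $T_k = u_k v_{k-1}$ under the boundary convention $v_0 = u_N = 1$, is a $GL_q(N)$ matrix. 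That this exhausts the open cell of $GL_q(N)$ is inherited from the classical existence of a Gauss decomposition, as discussed for example in \cite{DKS}.

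The mirror step is where the real work lies. My plan is to reproduce the combinatorial argument behind Theorem \ref{main} directly on the product $U^- T^-$. Two cosmetic changes distinguish the two setups: the unipotent factor is now lower rather than upper triangular, and the diagonal factor stands to the right rather than to the left of the unipotent. The first change preserves the purely unipotent $q$-commutation relations among the $b_{mn}$, because the $GL_q(2)$-analysis of the $2\times 2$ minors in Section \ref{sec:gauss} is driven by the same positional combinatorics under reflection. The second change is precisely what the substitution $v_m \leftrightarrow u_m\inv$ absorbs: a factor of $q^2$ extracted by pushing a diagonal entry past a unipotent generator from one side reappears with the opposite sign of the exponent when pushed from the other side, and inverting the diagonal generator cancels this sign change. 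Consequently every $q$-commutation derived in the upper case has a direct counterpart with $v_m$ replaced by $u_m\inv$, as asserted in the corollary.

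The principal obstacle is to carry out this mirror argument without a verbatim repetition of the lengthy induction of Section \ref{sec:gauss}. A clean route I would attempt first is to exhibit an explicit algebra isomorphism sending $\{a_{mn}, v_m\}$ with the relations of Theorem \ref{main} to $\{b_{mn}, u_m\inv\}$ with the corresponding relations, and to verify that this isomorphism carries the conclusion ``$T^+ U^+$ is a $GL_q(N)$ matrix'' to ``$U^- T^-$ is a $GL_q(N)$ matrix'' via an appropriate anti-transpose of matrices, despite the fact that matrix transposition does not by itself preserve the $GL_q(N)$ relations. Should this isomorphism prove too delicate to set up cleanly, the safe fallback is to re-run the induction of Section \ref{sec:gauss} on $U^- T^-$ from scratch: each step is strictly parallel to the upper case and produces identical $q^2$-exponents after the substitution $v_m \leftrightarrow u_m\inv$. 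With the commutation list established, the assembly of $U^- T U^+$ as a $GL_q(N)$ matrix follows immediately, and the $N^2$ generator count matches that of the main Theorem upon identifying the $N$ diagonal entries $T_k$ with the diagonal generators $U_k$ of that statement.
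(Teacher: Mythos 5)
Your proposal is correct and follows exactly the route the paper intends: the paper offers no separate proof of this corollary, treating it as immediate from Theorem \ref{main}, the symmetry between $\{b_{mn},u_m\inv\}$ and $\{a_{mn},v_m\}$, and the Proposition of Section \ref{sec:glqn} on products of $GL_q(N)$ matrices with mutually commuting generators. Your ``clean route'' for the mirror step is realized concretely by the anti-automorphism $z_{ij}\mapsto z_{N+1-i,N+1-j}$ (index rotation composed with reversal of products), which does preserve the $GL_q(N)$ relations and carries $T^+U^+$ to a product of the form $U^-T^-$; your fallback of re-running the induction also works.
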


The $q$-commutation relations for $a_{mn}$ (and also $b_{mn}$) can be represented neatly by a diagram:
\Eq{\begin{tikzpicture}[baseline={([yshift=3ex]a33.base)}]
\node (a41) at (0, 0) {$a_{41}$};
\node (a31) at +(60: 1.5) {$a_{31}$};
\node (a21) at +(60: 3) {$a_{21}$};
\node (a11) at +(60: 4.5) {$a_{11}$};
\node (a42) at (1.5, 0) {$a_{42}$};
\node (a32) at ($(a31)+(1.5,0)$) {$a_{32}$};
\node (a22) at ($(a21)+(1.5,0)$) {$a_{22}$};
\node (a43) at (3, 0) {$a_{43}$};
\node (a33) at ($(a32)+(1.5,0)$)  {$a_{33}$};
\node (a44) at (4.5, 0) {$a_{44}$};
\node (v1) at ($(a11)+(-1.5,0)$) {$v_1$};
\node (v2) at ($(a21)+(-1.5,0)$) {$v_2$};
\node (v3) at ($(a31)+(-1.5,0)$) {$v_3$};
\node (v4) at ($(a41)+(-1.5,0)$) {$v_4$};
\node at ($(a41)+(0,-0.5)$) {$\vdots$};
\node at ($(a42)+(0,-0.5)$) {$\vdots$};
\node at ($(a43)+(0,-0.5)$) {$\vdots$};
\node at ($(a44)+(0,-0.5)$) {$\vdots$};
\draw [-angle 90] (a21) -- (a11);
\draw [-angle 90] (a43) -- (a33);
\foreach \from/\to in {a21/a11,a31/a21,a31/a22,a32/a22,a41/a31,a41/a32,a41/a33,a42/a32,a42/a33,a43/a33}{
	\draw [-angle 90] (\from) -- (\to);
}
\foreach \from/\to in {a22/a21,a32/a31,a33/a32,a42/a41,a43/a42,a44/a43}{
	\draw [-angle 90]([yshift=0.8ex]\from.west) -- ([yshift=0.8ex]\to.east);
	\draw [-angle 90]([yshift=-0.6ex]\from.west) -- ([yshift=-0.6ex]\to.east);
}
\foreach \from/\to in {a11/v1,a21/v2,a31/v3,a41/v4}{
	\draw [-angle 90](\from) -- (\to);
}
\end{tikzpicture}}
where $u\to v$ means $uv=q^2vu$, and double arrows means it $q^2$-commutes with everything in that direction. In other words, the arrows consist of all the possible left directions, and all the north-east directions going up one level. Furthermore, note that the commutation relations for $a_{mn},v_m, u_m$ and $b_{mn'}$ are just copies of the Gauss decomposition \eqref{Gauss} for $GL_q(2)$.

\begin{Rem}\label{clusterX} It was pointed out by A. Goncharov that if we make a change of variables by taking ratios of the generators:
\Eq{a'_{m,n}=\case{a_{m,1}&n=1,\\qa_{m,n}a_{m,n-1}\inv&n>1,}}
(the $q$ factor is used to preserve positivity, cf. Section \ref{sec:posmod}) then the commutation relations among the $a'_{m,n}$ variables take a more symmetric form, represented by the diagram
\Eq{\begin{tikzpicture}[baseline={([yshift=3ex]a33.base)}]
\node (a41) at (0, 0) {$a_{41}'$};
\node (a31) at +(60: 1.5) {$a_{31}'$};
\node (a21) at +(60: 3) {$a_{21}'$};
\node (a11) at +(60: 4.5) {$a_{11}'$};
\node (a42) at (1.5, 0) {$a_{42}'$};
\node (a32) at ($(a31)+(1.5,0)$) {$a_{32}'$};
\node (a22) at ($(a21)+(1.5,0)$) {$a_{22}'$};
\node (a43) at (3, 0) {$a_{43}'$};
\node (a33) at ($(a32)+(1.5,0)$)  {$a_{33}'$};
\node (a44) at (4.5, 0) {$a_{44}'$};
\node (v1) at ($(a11)+(-1.5,0)$) {$v_1$};
\node (v2) at ($(a21)+(-1.5,0)$) {$v_2$};
\node (v3) at ($(a31)+(-1.5,0)$) {$v_3$};
\node (v4) at ($(a41)+(-1.5,0)$) {$v_4$};
\node at ($(a41)+(0,-0.5)$) {$\vdots$};
\node at ($(a42)+(0,-0.5)$) {$\vdots$};
\node at ($(a43)+(0,-0.5)$) {$\vdots$};
\node at ($(a44)+(0,-0.5)$) {$\vdots$};
\node at ($(v4)+(0,-0.5)$) {$\vdots$};
\draw [-angle 90] (a21) -- (a11);
\draw [-angle 90] (a43) -- (a33);
\foreach \from/\to in {a21/a11,a11/a22,a22/a21,a31/a21,a21/a32,a32/a31,a32/a22,a22/a33,a33/a32,a41/a31,a31/a42,a42/a41,a42/a32,a32/a43,a43/a42,a43/a33,a33/a44,a44/a43}
	\draw [-angle 90] (\from) -- (\to);
\draw [-angle 90](a11)--(v1);
\foreach \from/\to in {a21/v2,a31/v3,a41/v4}{
	\draw [-angle 90]([yshift=0.8ex]\from.west) -- ([yshift=0.8ex]\to.east);
	\draw [-angle 90]([yshift=-0.6ex]\from.west) -- ([yshift=-0.6ex]\to.east);
}
\end{tikzpicture}}

This choice of generators is closely related to the Poisson structure of the cluster $\mathcal{X}$-varieties, studied for example in \cite{FG1}.
\end{Rem}
We will use several lemmas to prove the theorem.
\begin{Lem}\label{lem-order} Assume the $q$-commutation relations in Theorem \ref{main} for $a_{mn}$ and $v_m$ hold. Then \eqref{order} holds.
\end{Lem}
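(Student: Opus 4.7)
My plan is to compute $x_{i,i+j}$ in two stages: first I factor out the $v$-variables from the quantum determinant, and then I identify the resulting quantum minor of $U^+$ with the claimed monomial using a path expansion together with a Lindström--Gessel--Viennot-type cancellation adapted to the quantum setting. For the first stage, I use $Z_{k,l}=v_{k-1}(U^+)_{k,l}$ (with $v_0:=1$); the commutation rules of Theorem \ref{main} give that $v_{k-1}$ commutes with every $a_{m,n}$ for $m\neq k-1$. Since $(U^+)_{k',l}$ is a polynomial in $a_{m,n}$ with $m\geq k'$, each $v_{k-1}$ commutes with all factors $(U^+)_{k',\cdot}$ for $k'\geq k$; combined with the fact that the $v$'s pairwise commute, I can slide every $v$ to the right in the quantum determinant expansion, obtaining
\[
x_{i,i+j}=\Bigl[\sum_{\sigma\in S_i}(-1)^\sigma (U^+)_{1,j+\sigma(1)}\cdots (U^+)_{i,j+\sigma(i)}\Bigr]\cdot v_1 v_2\cdots v_{i-1},
\]
so it remains to identify the bracketed quantum $i\times i$ minor of $U^+$ with $\prod_{m=1}^i\prod_{n=1}^j a_{m+n-1,n}$ in the stated order.

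To compute this minor, I expand each entry $(U^+)_{k,l}=(A_1\cdots A_{N-1})_{k,l}$ as a sum over admissible monotone lattice paths $k=k_0\leq\cdots\leq k_{N-1}=l$ with $k_s-k_{s-1}\in\{0,1\}$, where an increment at time $s$ requires $k_{s-1}\geq s$ and the weight of a path is $\prod_{s:k_s>k_{s-1}}a_{k_{s-1},s}$ written in the natural order $s=1,2,\ldots$. The quantum minor becomes a double sum over permutations $\sigma\in S_i$ and tuples $(\mathbf{p}_1,\ldots,\mathbf{p}_i)$ of paths $\mathbf{p}_r\colon r\to j+\sigma(r)$. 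I then apply a quantum Lindström--Gessel--Viennot involution: pair each tuple whose paths intersect to the tuple obtained by swapping the tails of the first two colliding paths at their first intersection, which flips the sign of $\sigma$. The key technical step is to show that this tail-swap preserves the ordered monomial exactly so that the paired contributions cancel; this uses the quasi-commutation rules of Theorem \ref{main} to verify that the $q^2$-factors produced when relocating the swapped $a$-symbols across rows of the ordered product cancel pairwise. The surviving terms correspond to non-intersecting tuples: the admissibility constraint $k_{s-1}\geq s$ forces $\mathbf{p}_1$ to increment at each of times $s=1,\ldots,j$ and then remain at $j+1$ (otherwise it would be stuck at $1$); non-intersection then forces $\mathbf{p}_2$ similarly from position $2$, and iteratively each $\mathbf{p}_r$. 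Hence $\sigma=\mathrm{id}$ and the unique tuple contributes $\prod_{r=1}^i(a_{r,1}a_{r+1,2}\cdots a_{r+j-1,j})$ in outer order $r=1,\ldots,i$ and inner order $n=1,\ldots,j$, which is precisely the formula \eqref{order} in the claimed order.

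The main obstacle lies in the rigorous verification of the $q^2$-cancellation in the LGV tail-swap. Classically this cancellation is trivial since all variables commute, but quantum-mechanically one must carefully track how the three quasi-commutation rules for the $a$-symbols combine when contiguous segments of the ordered product are moved across rows. I expect it to follow from the observation that at the collision vertex the two paths share a common row-index at the corresponding time step, which forces the $q^2$-factors picked up on either side of the swap to appear in compensating pairs according to the same-row rule ($n>n'$) and the adjacent-row rule ($n\leq n'$).
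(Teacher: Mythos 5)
Your overall architecture closely parallels the paper's own proof: both start from the entrywise expansion of $(U^+)_{k,l}$ as a sum over admissible increment sequences (the paper's formula \eqref{eachterm}), both slide the $v_{k-1}$'s to the right using the fact that $v_{k-1}$ only fails to commute with $a_{k-1,\ast}$, and both then argue that in the signed sum over $\sigma\in S_i$ all terms cancel except a unique one whose weight is the ordered product in \eqref{order}. Your identification of the surviving configuration (forced increments for $\mathbf{p}_1$, non-intersection propagating to $\mathbf{p}_2,\dots,\mathbf{p}_i$, hence $\sigma=\mathrm{id}$) is correct and coincides with the paper's ``minimal lexicographical'' term.

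The gap is the one you flag yourself: you do not prove that the two members of an LGV-paired couple, once rewritten in a common normal order, carry the \emph{same} power of $q$, so that they cancel exactly rather than leaving a residue of the form $(1-q^{2k})\cdot(\text{monomial})$. This is the entire non-classical content of the lemma --- rearranging a word in $q^2$-commuting variables generically changes it by a power of $q$ --- so ``I expect it to follow'' cannot stand as a proof. The paper closes this point by a different mechanism: it puts every monomial into a canonical ``maximal ordering'' and observes that this normal form depends only on the order in which the symbols $a_{p,\ast}$ appear within each fixed row index $p$; since the classical determinant cancellation pairs up terms agreeing in precisely that datum, the paired quantum terms are literally identical and cancel. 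If you wish to keep the tail-swap involution instead, you must carry out the postponed computation: using that both swapped suffixes start at the same row $c$ at the collision time $s_0$, that every suffix variable has column index $>s_0$ while every prefix variable has column index $\le s_0$, verify that transporting the suffixes past the intervening factors $S_{r+1}\cdots S_{r'-1}$ and past each other's prefixes produces mutually cancelling factors of $q^{\pm 2}$ under the rules $a_{mn}a_{mn'}=q^2a_{mn'}a_{mn}$ for $n>n'$ and $a_{mn}a_{m-1,n'}=q^2a_{m-1,n'}a_{mn}$ for $n\le n'$. Until one of these verifications is supplied, the lemma is not proved.
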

\begin{proof} We use the fact that, by induction, each entry $z_{ij}$ of the upper triangular matrix has a closed form expression given by
\begin{eqnarray}
z_{i,i+j}&=&v_{i-1}\sum_{1\leq t_1<t_2<...<t_{j}\leq i+j-1} (a_{i,t_1}a_{i+1,t_2}...a_{i+j-1,t_{j}})\nonumber\\
&:=&\sum_t S_{i,t}.\label{eachterm}
\end{eqnarray}
We also have $z_{i,i}=1$ and $z_{i,i-j}=0$.

Hence the quantum initial minor $x_{i,j}$ is given by sums of products of the form \Eq{\bS_{j,t}=S_{1,t_1}S_{2,t_2}...S_{j,t_j}.}

Now using the $q$-commutation relations, which say that $a_{mn}$ commutes with $a_{m'n'}$ when both $m>m'$ and $n>n'$, we can arrange the order on each monomial $\bS_{j,t}$ so that it has a ``maximal" ordering: If the product $a_{mn}a_{m'n'}$ appears in the ordering, then either $m'=m+1$ and $n'>n$, or $m'<m$. Furthermore, if the last term in $S_{k,t}$ is $a_{m,n}$, then the term $a_{m+1,n'}$ for $n'>n$ will not appear in $S_{k+1,t}$, so that nothing can commute to the front, while we can push all the $v_m$ to the back since $v_m$ commutes with $a_{m'n}$ for $m<m'$.

This ordering is unique in the sense that for every monomial where the order in which $a_{p,*}$ appears for each fixed $p$ is the same, the corresponding maximal ordering is the same. Hence the classical calculation works and all the terms will cancel, except the one with minimal lexicographical ordering. This term is precisely
$$S_{1,t_{min}}S_{2,t_{min}}...S_{i,t_{min}},$$
where
$$S_{k,t_{min}}=v_{k-1}a_{k,1}a_{k+1,2}...a_{k+j-1,j}.$$
Again each $v_{k-1}$ in each $S_k$ commutes with all the $a$'s, so we can move them towards the back, and hence giving the expression \eqref{order}.
\end{proof}

\begin{Lem}\label{lem-power} Assume the $q$-commutation relations for $a_{mn}$ and $v_m$ hold. Then \eqref{power} holds.
\end{Lem}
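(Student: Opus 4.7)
The plan is to exploit the explicit monomial form of $x_{i,i+j}$ given by Lemma \ref{lem-order} and reduce the quasi-commutation between $x_{i,i+j}$ and $x_{k,k+l}$ to a pairwise counting problem. Since the generators $\{a_{mn},v_s\}$ pairwise either commute or $q^2$-commute by the hypothesis of Theorem \ref{main}, any two monomials $M_1=\prod_\alpha g_\alpha$ and $M_2=\prod_\beta h_\beta$ in these generators satisfy $M_1 M_2=q^{2N(M_1,M_2)} M_2 M_1$, where
$$N(M_1,M_2)=\sum_{\alpha,\beta} c_{\alpha,\beta},\qquad g_\alpha h_\beta=q^{2c_{\alpha,\beta}} h_\beta g_\alpha.$$
Thus the proof reduces to enumerating all nontrivial pairs across the two monomials and summing their signed contributions.

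First I would classify the nontrivial pairs using the four relations of Theorem \ref{main}. Index the $a$-factors of $x_{i,i+j}$ by the rectangle $A_{i,j}=\{(m,n):1\le m\le i,\ 1\le n\le j\}$ so that the factor at $(m,n)$ is $a_{m+n-1,n}$, and similarly for $A_{k,l}$. The surviving pairs are then of three types: same-row $a$--$a$ pairs $(a_{p,n},a_{p,n'})$, adjacent-row $a$--$a$ pairs $(a_{p,n},a_{p-1,n'})$ with $n\le n'$, and row-matched $a$--$v$ pairs $(a_{p,n},v_p)$; all other pairs commute outright. Each such pair contributes $\pm 1$ to $N$ according to the rules, with the sign determined by which of the two monomials the $a$ or $v$ factor belongs to.

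Next I would perform the counting under the convention $j\le l$ (the opposite case following by antisymmetry, i.e.\ by swapping the roles of the two monomials). A substantial fraction of the raw contributions cancel in signed pairs: whenever both $(p,n)\in A_{i,j}$ and the corresponding shifted point lie in $A_{k,l}$ (or vice versa), the same-row contribution and its mirror appear with opposite signs. After these cancellations, what remains is the difference of two lattice-point counts, one concentrated where $x_{k,k+l}$ extends past the row range of $x_{i,i+j}$ (the diagonal band $l+2\le m+n\le k+l+1$) and one where $x_{k,k+l}$ starts below $x_{i,i+j}$ (the initial triangle $1\le m+n\le i$). The $v$--$a$ row-matchings fold cleanly into these counts because each $v_p$ pairs nontrivially only with $a_{p,*}$. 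Matching region by region with the statement gives the claimed formula for $P(i,j;k,l)$, and the antisymmetry $P(k,l;i,j)=-P(i,j;k,l)$ follows immediately from the construction.

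The main obstacle is the signed bookkeeping in the middle step: the rules treat same-row, adjacent-row, and row-matched pairs differently, so the naive product $A_{i,j}\times A_{k,l}$ must be unpacked into several case types before the cancellations become visible. Once the case analysis is carried out carefully (tracking precisely which factor lies in which monomial and in what order), the combinatorial identity with the two explicit lattice regions in the definition of $P$ falls out, and the rest of the lemma is formal.
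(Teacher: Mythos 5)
Your proposal follows essentially the same route as the paper: both use the ordered monomial form from Lemma \ref{lem-order}, reduce the quasi-commutation exponent to a signed pairwise count over the rectangles of $a$-factors classified by the same-row, adjacent-row, and $a$--$v$ relations, assume $j\le l$, and fold the $v$-contributions into the lattice-point counts (the paper organizes this as four explicit counts $A_1,\dots,A_4$ whose signed combination it simplifies case by case). The only difference is one of detail, not of method: you defer the explicit case analysis that the paper carries out to identify the two surviving lattice regions.
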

\begin{proof}
Using the expression given by Lem \ref{lem-order}, we can study how $x_{ij}$ and $x_{kl}$ commute. We do this by counting how many $q$-commutations it takes for a fixed $a_{m,n}$ appearing in $x_{i,i+j}$ to travel through each variable $a_{m',n'}$ in $x_{k,k+l}$.

First, notice that $a_{m,n}$ appears in $x_{k,k+l}$ only if $1\leq n\leq l$ and $0\leq m-n\leq k-1$. Now fix $m,n$ and consider $a_{mn}$. It $q^2$-commutes with $a_{m'n'}$ in $x_{k,k+l}$ when:
\begin{itemize}
\item $q^2$: $a_{m,n'}$ with $n'<n$, hence also $1\leq n'\leq l$ and $0\leq m-n'\leq k-1$. We can rewrite this as
$$A_1=\#\{n'|\max(m+1-k,1)\leq n'\leq \min(l,n-1,m)\},$$
\item $q^{-2}$: $a_{m,n'}$ with $n'>n$, hence also $1\leq n'\leq l$ and $0\leq m-n'\leq k-1$ which reduces to
$$A_2=\#\{n'|\max(m+1-k,n+1)\leq n'\leq \min(l,m)\},$$
\item $q^2$: $a_{m-1,n'}$ with $n'\geq n$, hence
$$A_3=\#\{n'|\max(m-k,n)\leq n'\leq \min(l,m-1)\},$$
\item $q^{-2}$: $a_{m+1,n'}$ with $n'\leq n$, hence
$$A_4=\#\{\n'|\max(m-l+2,1)\leq n'\leq \min(l,n,m+1)\}.$$
\end{itemize}
Hence the amount of $q^2$ powers picked up is just the signed sum of the count above. By a case by case study, these expressions can be simplified:
$$A_3-A_2=\left\{\begin{array}{cc}1& m+n\geq l+2,n+m\leq k+l+1, n\leq l\\0& \mbox{otherwise,}\end{array}\right.$$
$$A_1-A_4=\left\{\begin{array}{cc}1&n\geq l+1, k+1\leq m+n\leq k+l\\ -1& m+n\leq k,1\leq n\leq l\\0&\mbox{otherwise.}\end{array}\right.$$
Hence, the total amount of power picked up after summing all $m,n$ is given by
\begin{eqnarray*}
&&\#\{l+2\leq m+n\leq k+l+1,n\leq l\}+\#\{k+1\leq m+n\leq l+k, l+1\leq n \}\\
&&\tab-\#\{m+n\leq k,1\leq n\leq l\},
\end{eqnarray*}
subject to $1\leq m\leq i, 1\leq n\leq j$.

Let us assume $j\leq l$. Then $n\leq j\leq l$, and the expression can be simplified to
$$\#\{l+2\leq m+n\leq k+l+1\}-\#\{m+n\leq k\}.$$
subject to $1\leq m\leq i, 1\leq n\leq j$.
This takes care of $a_{mn}$. 

We still need to calculate those for $v_m$. Since there is only one $v_m$ appearing in $x_{k,k+l}$ for each $1\leq m\leq k$, we just need to count how many $a_{mn}$'s with index $m\leq k+1$ are there. Hence using the renamed $a_{m+n-1,n}$ the condition is 
$$\#\{m,n|m+n\leq k, 1\leq m\leq i, 1\leq n\leq j\},$$
and this is the amount of $q^{2}$ picked up, hence canceled with the last term in the previous calculation.

Similarly considering the other direction, the amount of $q^{-2}$ picked up is 
$$\#\{m,n|m+n\leq i, 1\leq m\leq k, 1\leq n\leq l\}.$$
Hence we arrive at our formula.
\end{proof}

\begin{Lem}\label{lem-power2} We have 
\Eq{P(i,j;k,l)=P(i,j-1,k,l-1).}
\end{Lem}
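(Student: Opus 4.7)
The plan is to reduce the identity to an elementary counting comparison. Write $P(i,j;k,l) = |A(i,j;k,l)| - |B(i,j;k,l)|$ where
\begin{align*}
A(i,j;k,l) &= \{(m,n) : l+2 \leq m+n \leq k+l+1,\ 1 \leq m \leq i,\ 1 \leq n \leq j\}, \\
B(i,j;k,l) &= \{(m,n) : 1 \leq m+n \leq i,\ 1 \leq m \leq k,\ 1 \leq n \leq l\},
\end{align*}
so the claim becomes $|A(i,j;k,l)| - |A(i,j-1;k,l-1)| = |B(i,j;k,l)| - |B(i,j-1;k,l-1)|$.

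For the $A$-side, I would observe that the shift $(m,n)\mapsto (m,n+1)$ sends $A(i,j-1;k,l-1)$ bijectively onto the subset of $A(i,j;k,l)$ consisting of pairs with $n\geq 2$: the sum condition $l+1\leq m+n\leq k+l$ transforms into $l+2\leq m+(n+1)\leq k+l+1$, while $1\leq n\leq j-1$ transforms into $2\leq n+1\leq j$, and the bounds on $m$ are untouched. Consequently $|A(i,j;k,l)| - |A(i,j-1;k,l-1)|$ counts the pairs $(m,1)\in A(i,j;k,l)$, namely integers $m$ with $l+1\leq m\leq \min(i,k+l)$, which equals $\max(0,\min(i-l,k))$.

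For the $B$-side, observe that $B$ does not depend on $j$ at all, so the difference collapses to $|B(i,j;k,l)| - |B(i,j;k,l-1)|$, which counts the pairs $(m,l)\in B(i,j;k,l)$, i.e.\ integers $m$ with $1\leq m\leq k$ and $m+l\leq i$. This also evaluates to $\max(0,\min(k,i-l))$, matching the $A$-side exactly. The only step requiring attention is the piecewise case split ($i\leq l$, $l<i\leq k+l$, or $i>k+l$), but both counts reduce to the same piecewise-linear function, so no real obstacle arises. The identity simply reflects that shifting the summation range in the first set of $P$ by one is compensated precisely by decrementing the $n$-range.
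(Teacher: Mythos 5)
Your proof is correct and follows essentially the same route as the paper's: both compute the two differences by the shift $(m,n)\mapsto(m,n+1)$ on the first set and by isolating the slice $n=l$ in the second, and both reduce to the common count $\#\{m : 1\leq m\leq k,\ m+l\leq i\}$. The only cosmetic difference is that you evaluate this count to the closed form $\max(0,\min(k,i-l))$, while the paper leaves the two counts in set form and observes they coincide.
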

\begin{proof} This is done by simple counting. Assume $j\leq l$. Let us compare the difference between the corresponding terms of the $P$ function. We have for the second term:
\begin{eqnarray*}
&&\#\{m,n| 1\leq m+n\leq i,1\leq m\leq k,1\leq n\leq l\} \\
&&\tab- \#\{m,n| m+n\leq i,1\leq m\leq k,1\leq n\leq l-1\}\\
&=&\#\{m,n| 1\leq m+l\leq i,1\leq m\leq k\},\\
\end{eqnarray*}
while for the first term we have
\begin{eqnarray*}
&&\#\{m,n| l+2\leq m+n\leq k+l+1, 1\leq m\leq i, 1\leq n\leq j\}\\
&&\tab-\#\{m,n| l+1\leq m+n\leq k+l, 1\leq m\leq i, 1\leq n\leq j-1\}\\
&=&\#\{m,n| l+2\leq m+n\leq k+l+1, 1\leq m\leq i, 1\leq n\leq j\}\\
&&\tab-\#\{m,n| l+2\leq m+n\leq k+l+1, 1\leq m\leq i, 2\leq n\leq j\}\\
&=&\#\{m| l+2\leq m+1\leq k+l+1, 1\leq m\leq i\}\\
&=&\#\{m| l+2\leq m+l+1\leq k+l+1, 1\leq m+l\leq i\}\\
&=&\#\{m| 1\leq m\leq k, 1\leq m+l\leq i\}.\\
\end{eqnarray*}
Hence the amounts cancel.
\end{proof}

\begin{proof}[Proof of Theorem \ref{main}] We will prove the theorem by induction. When $N=2$ it is just
$$\veca{1&0\\0&v_1}\veca{1&a_{11}\\0&1}$$
with $a_{11}v_1=q^2 v_1 a_{11}$. Hence this case holds trivially.

Assume everything hold for $\dim=N-1$. 

For $\dim=N$, first we notice that $a_{N-1,N-1}$ commutes with $a_{ii}$ for $i<N-1$ by looking at the entry $z_{1,i+1}=a_{11}a_{22}...a_{ii}$, which commutes with each other by the $GL_q(2)$ relations.

Next we notice that the cluster variables for a general $GL_q(N)$ matrix depend only on the variables appearing in $T^+U^+$, since we assumed that the lower triangular matrix $U^-T^-$ commutes with $T^+U^+$. Hence the relations between $x_{i,i+j}$ which hold for $T^+U^+$ will also hold for $GL_q(N)$.

Now for a general cluster variable $x_{k,N}$ in the new rank, we know from Lemma \ref{lem-order} that $a_{N-1,N-k}$ is the only new term appearing. Hence the commutation relations between $a_{N-1,N-k}$ and $a_{i+j-1,j}$ is equivalent to the commutation relations between $x_{k,N}$ and $x_{i,i+j}$ by induction on new terms. Now consider the $(N-1)\times (N-1)$ minor corresponding to $x_{N-1,N}$. This by definition satisfies the $GL_q(N-1)$ relations, and in particular the commutation relation between
$x_{k,N}$ and $x_{i,i+j}$ should be the same as the relation between $x_{k,N-1}$ and $x_{i,i+j-1}$. However, this is precisely the statement proved in Lemma \ref{lem-power2}.
\end{proof}

The above relations can be generalized to arbitrary reduced expression for $w_0$ as follows.
Let $(a,b,c)$ and $(a',b',c')$ be positive $q$-commuting variables such that 
\Eq{\label{diagram}\begin{tikzpicture}[baseline={([yshift=3ex]c.base)}]
\node (a) at (0, 0) {$a$};
\node (b) at +(50: 1.17) {$b$};
\node (c) at +(0: 1.5) {$c$};
\foreach \from/\to in {c/a, a/b}
\draw [-angle 90] (\from) -- (\to);
\end{tikzpicture}
\tab\mbox{and}\tab
\begin{tikzpicture}[baseline={([yshift=3ex]b.base)}]
\node (a) at (0, 0) {$a'$};
\node (b) at +(-50: 1.17) {$b'$};
\node (c) at +(0: 1.5) {$c'$};
\foreach \from/\to in {b/c, c/a}
\draw [-angle 90] (\from) -- (\to);
\end{tikzpicture}}
where again $u\to v$ means $uv=q^2vu$.

Then as in \eqref{u0}, the products $$x_2(a)x_1(b)x_2(c)=x_1(a')x_2(b')x_1(c')$$ form a copy of $U^+$ of the Gauss Decomposition of $GL_q(3)$ corresponding to the reduced expressions
$$w_0=s_2s_1s_2=s_1s_2s_1,$$ where 
\begin{eqnarray*}
a'&=&(a+c)\inv cb=bc(a+c)\inv,\\
b'&=&a+c,\\
c'&=&(a+c)\inv ab=ba(a+c)\inv,
\end{eqnarray*}
and this map 
\Eq{\phi:(a,b,c)\mapsto(a',b',c')\label{inphi}}
 is an involution between $(a,b,c)\corr(a',b',c')$. In particular, we see that by applying this transformation to any three consecutive variables $a_{mn}$ corresponding to the sub-word of the form $s_is_js_i$ with $i$ adjacent to $j$, all the arrows in the diagram \eqref{diagram} are preserved. Applying this transformation, we can deduce all quantum Lusztig's variables for arbitrary reduced expression for $w_0$. Hence we can restate the commutation relations in Theorem \ref{main} as follows:

\begin{Thm}\label{GLcomtrans} Let $a_{i_n,m}$ be the coordinates of $U^+$ corresponding to the reduced expression of $w_0=s_{i_1}...s_{i_n}$. Then the product $T^+U^+$ is a $GL_q(N)$ matrix if and only if for any $|i-j|=1$, the coordinates $\{v_i,v_j,a_{i,m},a_{j,n},a_{i,k}\}$ form a copy of $GL_q(3)$, where $\{a_{i,m},a_{j,n},a_{i,k}\}$ appear in this exact order in the parametrization of $U^+$. In other words, we have
\Eq{\begin{tikzpicture}[baseline={([yshift=3ex]c.base)}]
\node (a) at (0, 0) {$a_{i,m}$};
\node (b) at +(50: 1.5) {$a_{j,n}$};
\node (c) at +(0: 1.93) {$a_{i,k}$};
\node (vi) at (-2,1.15) {$v_i$};
\node (vj) at (-2,0) {$v_j$};
\node (dot) at (-0.5,1.15) {$\cdots$};
\node (dot2) at (-0.5,0) {$\cdots$};
\foreach \from/\to in {c/a, a/b}
	\draw [-angle 90] (\from) -- (\to);
\draw [-angle 90]([yshift=0.8ex]dot2.west) -- ([yshift=0.8ex]vj.east);
\draw [-angle 90]([yshift=-0.6ex]dot2.west) -- ([yshift=-0.6ex]vj.east);
\draw [-angle 90](dot) --(vi);
\end{tikzpicture}}
\end{Thm}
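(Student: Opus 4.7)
The plan is to reduce the claim for an arbitrary reduced expression of $w_0$ to the canonical case covered by Theorem \ref{main}, using Matsumoto's theorem: any two reduced expressions for $w_0$ are connected by a sequence of braid moves $s_is_js_i \leftrightarrow s_js_is_j$ (for $|i-j|=1$) and commutation moves $s_is_j \leftrightarrow s_js_i$ (for $|i-j|\geq 2$). I would induct on the number of such moves required to transform the given reduced expression into the canonical one.

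The commutation moves are handled trivially: by Theorem \ref{main}, coordinates $a_{p,q}$ whose row indices differ by at least two already commute, and the corresponding elementary matrices $x_i(a)$ and $x_j(b)$ commute as well, so swapping $s_is_j \leftrightarrow s_js_i$ leaves both the coordinate algebra and the matrix $T^+U^+$ unchanged. The substantive case is the braid move. Given three consecutive letters $s_is_js_i$ in the word with $|i-j|=1$, I would apply the involution $\phi$ from \eqref{inphi} to the corresponding coordinates $(a,b,c)=(a_{i,m},a_{j,n},a_{i,k})$, yielding $(a',b',c')$ with $a'=bc(a+c)^{-1}$, $b'=a+c$, $c'=ba(a+c)^{-1}$. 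First I would verify that $(a',b',c')$ together with $v_i,v_j$ satisfies the right-hand diagram of \eqref{diagram}; this is a direct $GL_q(3)$ check from the assumed left-hand diagram relations, essentially the statement asserted just above the theorem.

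The key technical step is to show that every outside coordinate has the same $q$-commutation with $a',b',c'$ as prescribed by Theorem \ref{main} for the transformed word. The decisive observation will be that every outside variable $x$ quasi-commutes with each of $a,b,c$ with the \emph{same} power $q^\alpha$: this uniformity holds because the three positions occupy consecutive slots of the word and carry pairwise-adjacent indices $i,j,i$, so the arrow rules of Theorem \ref{main} treat the triple uniformly when paired against any other $a_{p,q}$ or $v_k$. Granted this, a short calculation shows $x$ also $q^\alpha$-commutes with each of $a',b',c'$: for $b'=a+c$ by linearity, and for $a'=bc(a+c)^{-1}$, $c'=ba(a+c)^{-1}$ by cancellation of the two $q^\alpha$'s from the numerator against the $q^{-\alpha}$ from $(a+c)^{-1}$. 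Commutations with $v_k$ for $k\neq i,j$ transfer unchanged. The ``only if'' direction then follows by running the induction backwards: $\phi$ is an involution and the matrix product $T^+U^+$ is invariant under it (by construction of $\phi$ as a matrix identity), so local $GL_q(3)$ relations at every adjacent pair of indices allow us to return to the canonical parametrization, where Theorem \ref{main} guarantees the $GL_q(N)$ property.

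The hard part will be the uniformity assertion: carefully verifying by case analysis on the row index $p$ and word position of an outside $a_{p,q}$ that its sign of $q$-commutation with each of $a,b,c$ indeed agrees. This reduces to parsing the arrow rules of Theorem \ref{main} under the constraints that the three variables occupy consecutive slots with row indices $(i,j,i)$ satisfying $|i-j|=1$. Once the uniformity is in place, induction along the Matsumoto graph of reduced expressions, anchored at the canonical one by Theorem \ref{main}, completes the proof.
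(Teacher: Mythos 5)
Your overall strategy coincides with the paper's: the paper gives no formal proof of Theorem \ref{GLcomtrans}, but the discussion preceding it introduces exactly the involution $\phi$ of \eqref{inphi}, asserts that applying it to any consecutive sub-word $s_is_js_i$ preserves the arrows of \eqref{diagram}, and deduces the general statement from Theorem \ref{main}; your Matsumoto-graph induction with trivial handling of commutation moves is the correct fleshing-out of that sketch.

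However, your ``decisive observation'' is false as stated, and it is precisely the step you flag as the key one. It is \emph{not} true that an outside variable $x$ quasi-commutes with $a=a_{i,m}$, $b=a_{j,n}$, $c=a_{i,k}$ with one and the same power of $q$: take $x=v_i$, which satisfies $av_i=q^2v_ia$ and $cv_i=q^2v_ic$ but commutes with $b$ (row index $j\neq i$); the same failure occurs for any other row-$i$ or row-$j$ variable, since the arrow rules distinguish rows. What is true, and what the argument actually needs, is the weaker uniformity that $x$ carries the \emph{same} power against the two outer variables $a$ and $c$ only (they share row index $i$ and are adjacent in the row-$i$ subword, so no rule separates them). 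Writing $xa=q^{2\alpha}ax$, $xc=q^{2\alpha}cx$, $xb=q^{2\beta}bx$, one gets $xb'=q^{2\alpha}b'x$ and $xa'=q^{2\beta}a'x$, $xc'=q^{2\beta}c'x$: the powers are \emph{permuted} from $(\alpha,\beta,\alpha)$ to $(\beta,\alpha,\beta)$, which is exactly what consistency demands since the row indices at those three slots change from $(i,j,i)$ to $(j,i,j)$. Your version of the computation, resting on the false uniformity, would conclude that the powers are preserved identically, which contradicts the relabelling of rows (again visible on $v_i$: after the move $b'$ must $q^2$-commute with $v_i$ while $a',c'$ must commute with it). So the skeleton of your proof is sound, but the central lemma must be restated and re-proved in the weaker, row-sensitive form before the induction closes.
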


\section{Embedding into the algebra of quantum tori}\label{sec:qtori}
We would like to find an embedding of $GL_q(N)$ into copies of the algebra of quantum tori $\C[\T_q]$. Hence the remaining task is to find an appropriate realization of the generators $a_{mn},v_m$ using several copies of the Weyl pair $\{u,v\}$ satisfying $uv=q^2vu$.

\begin{Thm}\label{qtori1} There is an embedding of algebra $$T^+U^+\to \C[\T_q]^{\ox \frac{N^2+N-4}{2}}$$
given by
\Eq{v_m&\mapsto v_m\\
a_{mn}&\mapsto u_m \left(\prod_{k=n}^{m-1}v_{m-1,k}\right)\left(\prod_{l=1}^{n-1}v_{m,l}\right)u_{m,n}.}
where $T^+U^+$ is now realized as the algebra generated by $\{a_{mn},v_m\}$ satisfying the relations from Theorem \ref{main}, and $\C[\T_q]^{\ox \frac{N^2+N-4}{2}}$ is generated by the Weyl pairs $\{u_m,v_m\}$ and $\{u_{mn},v_{mn}\}$ for $1\leq n\leq m\leq N-1$, where we have omitted the last set of generators $\{u_{N-1,N-1},v_{N-1,N-1}\}$. (We define $u_{N-1,N-1}:=1$ in the formula).

Similarly, for $U^-T^-$ generated by $\{b_{mn},u_m\}$, we have the embedding
$$U^-T^-\to \C[\T_q]^{\ox \frac{N^2+N-4}{2}}$$
given by
\Eq{u_m&\mapsto u'_m\\
b_{mn}&\mapsto v'_m \left(\prod_{k=n}^{m-1}v'_{m-1,k}\right)\left(\prod_{l=1}^{n-1}v'_{m,l}\right)u'_{m,n}.}
where the generators $\{u',v'\}$ (with same indexing above) commute with $\{u,v\}$ used above.

Together, this gives an embedding of algebra
\Eq{GL_q(N)=U^-T^-T^+U^+\longrightarrow \C[\T_q]^{\ox N^2+N-4}.}
\end{Thm}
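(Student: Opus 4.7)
The plan is to verify the embedding in three stages: first confirm that the proposed monomial assignments respect the $q$-commutation relations of Theorem \ref{main} and its corollary, next glue the images of the two Borel halves together, and finally establish injectivity.

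The core computation relies on the identity $(u^a v^b)(u^c v^d) = q^{2(ad-bc)}(u^c v^d)(u^a v^b)$, valid in any Weyl pair $uv = q^2 vu$. Under the proposed map each generator image is a monomial in the Weyl generators with every exponent in $\{0,1\}$, so the $q$-factor by which two images fail to commute is the product over Weyl pairs of the contributions $q^{2(a_i d_i - b_i c_i)}$. For each relation of Theorem \ref{main} I would identify the unique pair that contributes non-trivially: the diagonal pair $(u_{m'}, v_{m'})$ for the commutation of $a_{mn}$ with $v_{m'}$; the pair $(u_{m,n'}, v_{m,n'})$ for $a_{mn}$ against $a_{mn'}$ with $n>n'$, since $v_{m,n'}$ occurs in the $\prod_{l=1}^{n-1} v_{m,l}$ block of $a_{mn}$ while $u_{m,n'}$ is the tail of $a_{mn'}$; and the pair $(u_{m-1,n'}, v_{m-1,n'})$ for $a_{mn}$ against $a_{m-1,n'}$ with $n \leq n'$, where $v_{m-1,n'}$ lies in the $\prod_{k=n}^{m-1} v_{m-1,k}$ block of $a_{mn}$. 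In the "commute otherwise" cases, one must verify that the product-index ranges fail to overlap with the tail $u$-factor of the partner generator, so that every Weyl pair contributes zero.

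The identical argument goes through for $U^-T^-$ with the primed Weyl pairs, since by the corollary to Theorem \ref{main} the generators $\{b_{mn}, u_m^{-1}\}$ satisfy the same relations as $\{a_{mn}, v_m\}$. Because the primed and unprimed Weyl pairs sit in disjoint tensor factors of $\C[\T_q]^{\otimes (N^2+N-4)}$, the two Borel images automatically commute, reproducing the hypothesis that makes $U^-TU^+$ a $GL_q(N)$ matrix. Injectivity of the combined map is then read off the exponent matrix of the assignment: the generator $u_{m,n}$ appears uniquely in the image of $a_{mn}$ for every $(m,n) \neq (N-1, N-1)$, and each $v_m$ pulls back only from $v_m$, so the exponent matrix has a near-identity block on these rows. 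The remaining column, corresponding to $a_{N-1,N-1}$, is distinguished by its content of $u_{N-1}$ together with the complete set $v_{N-1,1}, \ldots, v_{N-1,N-2}$, a combination no other generator image exhibits; hence the matrix has full column rank, the induced map on the abelianized quantum tori is injective, and therefore so is the original map.

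The main obstacle will be the combinatorial bookkeeping of the "commute otherwise" cases, where several Weyl pairs could a priori contribute and the cancellations must be tracked across multiple overlapping range conditions on the products $\prod v_{m-1,k}$ and $\prod v_{m,l}$. The cleanest approach is to tabulate the full exponent matrix of each image once and read off all commutation constants simultaneously, rather than to examine each pair of generators separately.
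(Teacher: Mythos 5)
Your proposal is correct and follows essentially the same route as the paper's one-line proof: since each $u$-type generator of the target occurs in at most one tensor factor of each image monomial, the $q$-power by which two images fail to commute is obtained by counting which $v_{m,n}$ of one monomial meets the $u$-type tail of the other, which is precisely your ``unique contributing Weyl pair'' computation via $(u^av^b)(u^cv^d)=q^{2(ad-bc)}(u^cv^d)(u^av^b)$. Your only substantive addition is the explicit injectivity argument via the exponent matrix (which the paper leaves implicit and which is worth writing out, using the PBW basis of ordered monomials for the source algebra); when you carry out the sign bookkeeping, be careful that the orientation of $uv=q^2vu$ in each target Weyl pair is matched consistently against the orientation of the relations in Theorem \ref{main}, since the relative left-to-right position of the $v$-blocks and the $u$-tails is exactly what determines whether a given pair contributes $q^{2}$ or $q^{-2}$.
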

\begin{proof} The proof is straightforward to check, since for the $u$ variables only $u_m$ and $u_{mn}$ appear in $a_{mn}$. Hence we just need to count, at most once, how many $v_{mn}$ appears in another variables. 
\end{proof}
\begin{Rem} This embedding resembles the Drinfeld double construction, which reads
\Eq{\cD(\cU_q(\fb))=\cU_q(\g)\ox \cU_q(\fh).}
With our assignment for $U^-T^-T^+U^+$, we can actually combine the diagonal variables (hence ``modding" out $\fh$) as follows:
\Eq{V_i=u'_iv_{i-1}\tab 1\leq i\leq N,}
\Eq{U_{i}=u_{i-1}={v'_i}\inv,}
(where $v_0=u'_N=1$) which gives an embedding of $GL_q(N)$ using only $(N^2-2)$ copies of $q$-tori (with inverses adjoined).
\end{Rem}
This is just one example of realizing the quantum variables where we can actually write down explicit expressions. In fact the minimal amount needed can be substantially smaller:
\begin{Thm}\label{qtori2}
The minimal amount of $q$-tori needed to realize $T^+U^+$ is given by
$\lfloor\frac{N^2}{4}\rfloor$
and the full group $GL_q(N)$ can be embedded into $\C[\T_q]^{\ox \lfloor\frac{N^2}{2}\rfloor}$.
\end{Thm}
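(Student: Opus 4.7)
The plan is to reduce the minimality statement to a computation of the rank of the skew-symmetric $q$-commutation matrix $\Lambda$ of the generators, and then to realize the corresponding quotient algebra as a pure quantum torus.

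\textbf{Lower bound.} For $T^+U^+$, index $\Lambda\in M_n(\Z)$ by the $n=\tfrac{(N-1)(N+2)}{2}$ Gauss-Lusztig generators $\{v_m, a_{m,n}\}$ via $X_iX_j=q^{2\Lambda_{ij}}X_jX_i$, as recorded in the diagram following Theorem \ref{main}. Any algebra map $\phi\colon T^+U^+\to\C[\T_q]^{\otimes k}$ descends to the abelianizations and preserves the $\log_{q^2}$-commutator pairing, which on the target is the standard symplectic form of rank $2k$. Because $q$ is generic, two nonzero elements of $\C[\T_q]^{\otimes k}$ satisfying a nontrivial $q$-commutation must be (up to scalars) supported on a single weight, so after twisting $\phi$ by an automorphism of $\C[\T_q]^{\otimes k}$ each $\phi(X_i)$ becomes a Laurent monomial $T^{w_i}$ with $\omega(w_i,w_j)=\Lambda_{ij}$. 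A realization that is faithful modulo the center forces $\{w_i\}$ to span a sub-lattice of rank $\mathrm{rank}(\Lambda)$, so $2k\ge\mathrm{rank}(\Lambda)$.

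\textbf{Kernel computation.} A vector $\alpha=(\beta_m;\gamma_{m,n})\in\ker\Lambda$ corresponds to a central monomial $X^\alpha$. Centrality with $v_{m_0}$ forces $\sum_{n=1}^{m_0}\gamma_{m_0,n}=0$; centrality with each $a_{m_0,n_0}$ gives an expression for $\beta_{m_0}$ in terms of neighboring $\gamma$'s, and its independence of $n_0$ reduces to the discrete identity $\gamma_{m-1,n-1}+\gamma_{m+1,n}=\gamma_{m,n-1}+\gamma_{m,n}$ throughout the triangular array. Parametrizing the solutions by the values along a fixed diagonal and running a short induction on $N$ yields $\dim\ker\Lambda=\lfloor(N-1)/2\rfloor$, hence $\mathrm{rank}(\Lambda)=2\lfloor N^2/4\rfloor$, whence $k\ge\lfloor N^2/4\rfloor$. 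A verbatim analysis on the $N^2$-generator set $\{a_{m,n},b_{m,n},U_k\}$ of $GL_q(N)$, using that the $a$- and $b$-blocks are independent and coupled only through $U_k=u_kv_{k-1}$, gives $\dim\ker\Lambda_{GL_q(N)}=N\bmod 2$, the odd-$N$ center being spanned by a quantum-determinant-type monomial; thus $\mathrm{rank}(\Lambda_{GL_q(N)})=2\lfloor N^2/2\rfloor$.

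\textbf{Upper bound and construction.} Having identified $\ker\Lambda$, pick an integral basis of it and complete to a $\Z$-basis of $\Z^n$ in which $\Lambda$ is brought to symplectic normal form $\bigoplus_{i}\bigl(\begin{smallmatrix}0&1\\-1&0\end{smallmatrix}\bigr)\oplus 0$. The key subtask is to verify that all nonzero elementary divisors of $\Lambda$ equal $1$, which follows from its sparse $\pm 1$ entries together with a layer-by-layer unimodular reduction peeling off a symplectic pair $(v_m,a_{m,n(m)})$ at each step, mirroring the Gauss-Lusztig decomposition itself. After quotienting by the ideal that sets each chosen central monomial equal to the scalar $1$, the algebra becomes isomorphic to $\C[\T_q]^{\otimes\lfloor N^2/4\rfloor}$ (respectively $\C[\T_q]^{\otimes\lfloor N^2/2\rfloor}$ for $GL_q(N)$), giving the required minimal realization/embedding. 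The main obstacle is the combinatorial kernel computation, in particular exhibiting explicit generators of the center whose exponents satisfy the discrete identity above and which should encode the diagonal invariants underlying the cluster $\mathcal{X}$-variety structure of Remark \ref{clusterX}; the unimodular elementary-divisor check and the extension from $T^+U^+$ to $GL_q(N)$ are then routine.
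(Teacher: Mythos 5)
Your strategy coincides with the paper's: encode the $q$-commutations of $\{v_m,a_{m,n}\}$ in a skew-symmetric integer matrix, identify the minimal number of $q$-tori with half the rank of that form, and obtain the realization by bringing the form to symplectic normal form; your count $\dim\ker\Lambda=\lfloor (N-1)/2\rfloor$, hence $\mathrm{rank}(\Lambda)=2\lfloor N^2/4\rfloor$, agrees with what the paper asserts (and, like the paper, you leave the actual induction to the reader). One caveat on your lower bound: the claim that two elements of $\C[\T_q]^{\otimes k}$ in a nontrivial $q$-commutation must each be supported on a single weight is false --- e.g.\ $u$ and $v(1+u)$ satisfy $u\cdot v(1+u)=q^2\,v(1+u)\cdot u$ --- so you cannot conclude that an arbitrary algebra embedding is monomial after twisting. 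This does not sink the argument in the paper's intended sense, where ``realize'' means assigning to each generator a single exponential $e^{2\pi b(\text{linear form in }x_i,p_i)}$ (cf.\ Theorem \ref{qtori1} and the $N\le 6$ example): for such monomial realizations the exponent vectors must reproduce the pairing $\Lambda$ inside a symplectic space of dimension $2k$, giving $2k\ge\mathrm{rank}(\Lambda)$ directly. If you want minimality among all algebra embeddings you would need a different tool (e.g.\ a Gelfand--Kirillov dimension or simplicity argument for the quantum torus modulo its center), which neither you nor the paper supplies.
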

\begin{proof}
 Consider the symplectic form on the variables $\{a_{mn},u_m\}$ defined by
\Eq{\<x,y\>=\case{1&xy=q^2yx\\0&xy=yx\\-1 &xy=q^{-2}yx.}}
Then the minimal amount of $q$-tori needed to realize such relations can be found by finding the signature of this form. The skew-symmetric matrix of size $\frac{N^2+N-2}{2}$ encoding this form is actually quite simple. If we index the variables by $$a_{11},v_{1},a_{21},a_{22},v_{2},a_{31},...,$$
the upper triangular part of the matrix is given by $m+1$ consecutive 1's to the right starting at the first off diagonal entry corresponding to $a_{m*}$, truncated at the boundary, and zero otherwise. The full matrix is then obtained by anti-symmetrizing it, and we can find its kernel by elementary operations.
\end{proof}
In principle, it is possible to find the decomposition into the $q$-tori by diagonalizing the skew-symmetric matrix, corresponding to the symplectic form, into blocks $\veca{0&1\\-1&0}$ and read out the transformation. 

\begin{Ex}As an example, we illustrate the cases up to $N=6$, giving the embedding of $T^+U^+$ into
$\lfloor\frac{N^2}{4}\rfloor = 9$ copies of the algebra of $q$-tori generated by $\{U_i,V_i\}$ without any powers or inverses (for $N<6$ we ignore the extra tori):

\begin{eqnarray*}
a_{11}&=&U_1\\
u_1&=&V_1\\\\
a_{21}&=&V_1U_2\\
a_{22}&=&qV_2U_2U_3\\
u_2&=&V_2\\\\
a_{31}&=&V_2U_3U_4\\
a_{32}&=&V_3U_4U_5\\
a_{33}&=&qV_4U_4U_5\\
u_3&=&V_4\\\\
a_{41}&=&V_4U_5U_6V_7V_8\\
a_{42}&=&qV_5U_5U_6V_7\\
a_{43}&=&U_3V_5U_6\\
a_{44}&=&qV_6U_6\\
u_4&=&V_6\\\\
a_{51}&=&V_6U_9\\
a_{52}&=&qV_6U_8V_9U_9\\
a_{53}&=&V_6U_7V_7V_9U_9\\
a_{54}&=&U_5V_6V_7U_8V_8V_9U_9\\
a_{55}&=&qV_8V_9U_9\\
u_5&=&V_9.\\
\end{eqnarray*}
Again the extra $q$ factors are introduced for positivity, as explained in the next section.
\end{Ex}
\begin{Con} It is possible to decompose each $a_{mn}$ into a product of \emph{single} $U_i$ and $V_i$ with the minimal amount of copies. This means that we have an embedding of $GL_q(N)$ into the \emph{polynomial} algebra generated by the minimal amount of $U_i$ and $V_i$, where the matrix entries $z_{ij}$ are expressed only in terms of polynomials of $U_i$ and $V_i$ with coefficients of the form $+q^n$.
\end{Con}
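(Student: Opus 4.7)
I would proceed by induction on $N$, exploiting the fact that the skew-symmetric form of Theorem \ref{qtori2} has a very rigid block structure: the rows corresponding to the $m$-th level $\{a_{m,1},\dots,a_{m,m},v_m\}$ interact with the lower levels only via the relations encoded in Theorem \ref{main} (the ``west'' and ``north-east one step up'' arrows in the main diagram). Thus at the inductive step, one only needs to realize the new generators $\{a_{N-1,1},\dots,a_{N-1,N-1},u_{N-1}\}$ using some number of fresh Weyl pairs together with the already-placed $U_i,V_i$ from levels $1,\dots,N-2$, in such a way that (i) each new $a_{N-1,n}$ is a product of \emph{single} generators $U_i$ or $V_i$ with a prefactor of the form $q^k$, and (ii) the total number of Weyl pairs matches the count $\lfloor N^2/4 \rfloor$.

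The concrete combinatorial plan is to read off such a product expansion directly from a symplectic normal form of the skew matrix. I would first compute the rank jump $\rho_N - \rho_{N-1}$ of the form (where $\rho_N$ is twice the number of tori used up to level $N-1$) and show, by elementary row/column operations that only use integer combinations of the generator labels, that exactly this many new Weyl pairs suffice. The base cases $N=2,3$ are immediate from the table and the worked example up to $N=6$ already exhibits the pattern; in particular, reading the example one sees that each new row reuses precisely the tori of the adjacent older rows that are forced to appear by the $q^2$-commutation constraint, and introduces new tori only for the directions that remain undetermined. I would codify this as a recursive rule: $a_{m,n}$ is a product of the single generators indexed by a combinatorial path in the triangular diagram from position $(m,n)$ to the boundary, with the $q$ prefactor determined by the number of ``crossings'' (i.e., by the number of pairs of generators in the product that $q^2$-anticommute with their intended order versus their chosen order).

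The positivity prefactor $+q^n$ is the one place where a clean argument is needed, because whenever two single generators $U_i,V_i$ appearing in the expansion of the same $a_{m,n}$ satisfy $U_iV_i = q^2 V_iU_i$ and are written in the ``wrong'' order, the symmetrized positive self-adjoint product $(U_iV_i)^{1/2}(V_iU_i)^{1/2}$ equals $q\,V_iU_i$ (cf. the discussion in Section~\ref{sec:posmod} to which the author appeals). So the $+q^n$ factor is forced by counting the number of such Weyl crossings inside the chosen ordering, and is nonnegative precisely because one always chooses the ordering dictated by the combinatorial path above. The embedding claim for all of $GL_q(N)$ then follows by combining the realizations of $T^+U^+$ and $U^-T^-$ as in Theorem~\ref{qtori1}, with independent commuting copies of the Weyl pairs on the two sides.

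The main obstacle I anticipate is the bookkeeping at the inductive step: one has to show simultaneously that the chosen single-generator product realizes the correct element-by-element commutation relations of Theorem~\ref{main}, that the number of fresh tori matches $\lfloor N^2/4\rfloor - \lfloor (N-1)^2/4\rfloor$, and that no negative $q$-power is ever introduced. The cleanest way I see is to package the realization as a map from the triangular poset of indices $(m,n)$ into a symplectic lattice, and to show that an explicit ``zig-zag'' labeling (generalizing the pattern visible in the $N=6$ example, where each level alternately reuses the $V$ of the previous level and introduces one new $U,V$ pair) simultaneously achieves all three requirements. Verifying the sign/positivity statement in full generality, rather than case-by-case, is where I expect the real work to lie.
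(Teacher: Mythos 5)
The statement you are addressing is presented in the paper as an open \emph{conjecture}: the paper offers no proof, only the minimal count $\lfloor N^2/4\rfloor$ of Theorem \ref{qtori2} (obtained from the signature of the skew-symmetric form) and a worked example up to $N=6$. Judged on its own terms, your proposal has a genuine gap at its core. The entire content of the conjecture is that the symplectic change of basis reducing the skew form to standard $\veca{0&1\\-1&0}$ blocks can be chosen so that every exponent is $0$ or $1$, i.e.\ each $a_{mn}$ is a product of \emph{single} generators, each appearing at most once. Your plan to ``read off such a product expansion directly from a symplectic normal form'' presupposes exactly this: integer symplectic reduction in general produces integer linear combinations of exponent vectors, which translate into arbitrary (possibly negative) powers $U_i^{k},V_i^{k}$. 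Theorem \ref{qtori2} guarantees only the count of tori, not the $\{0,1\}$ exponent condition, and nothing in your inductive step forces the new level's exponent vectors to be expressible as sums of \emph{distinct} basis vectors of the previously chosen symplectic basis. That is precisely where the conjecture lives, and your argument does not touch it.

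A second problem is that the ``zig-zag'' recursive rule you propose to extract from the $N=6$ example is not actually present there in the clean form you describe: compare $a_{44}=qV_6U_6$ (two factors) with $a_{54}=U_5V_6V_7U_8V_8V_9U_9$ (seven factors), or $a_{43}=U_3V_5U_6$, which reaches back to a torus introduced several levels earlier. A proof would require an explicit labeling valid for all $N$ together with a verification that it reproduces every relation of Theorem \ref{main}; you supply neither. The positivity bookkeeping is comparatively minor --- for a Weyl pair $uv=q^2vu$ the positive essentially self-adjoint combination is $qvu=q^{-1}uv$, so the prefactor is $q^{\#\text{crossings}}$ once an ordering is fixed --- but it sits downstream of the unproved existence of the single-generator decomposition. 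As written, your proposal restates the conjecture as a strategy rather than proving it.
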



\section{Positivity and the modular double}\label{sec:posmod}
Let $q=e^{\pi ib^2}$ with $0<b^2<1, b\in\R\setminus\Q$ so that $|q|=1$ is not a root of unity. In this section we introduce the notion of the positive quantum semi-group $GL_q^+(N,\R)$ and Faddeev's notion of the modular double of $GL_q^+(N,\R)$. The basic idea is to represent the generators $z_{ij}$ in terms of positive essentially self adjoint operators acting on certain Hilbert space, where these operators are necessarily unbounded. 

First we introduce the definition of an integrable representation of the canonical commutation relation defined by \cite{Sch}:
\begin{Def}\label{Sch}Let $X,Y$ be positive essentially self-adjoint operators acting on a Hilbert space $\cH$. The $q$-commutation relation $``uv=q^2vu"$ is defined to be
\Eq{u^{is}v^{it}=q^{-2st}v^{it}u^{is}}
for any $s,t\in\R$ as relations of bounded operators, where $u^{is}$ and $v^{it}$ are unitary operators on $\cH$ by the use of functional calculus.
\end{Def}
A canonical irreducible integrable representation of $uv=q^2vu$ is given by
\Eq{\label{uv}u=e^{2\pi bx}, \tab v=e^{2\pi bp},}
where $p=\frac{1}{2\pi}\del[,x]$ acting as unbounded operators on $L^2(\R)$.

On the other hand, the idea of the modular double of the Weyl pair $\{u,v\}$ is introduced by Faddeev in \cite{Fa1}. There it is suggested that for positive self-adjoint operators $u,v$ as in \eqref{uv}, one has to consider also the operators given by
\Eq{\til[u]:=u^{\frac{1}{b^2}},\tab \til[v]:=v^{\frac{1}{b^2}}}
so that \Eq{\til[u]\til[v]=\til[q]^2\til[v]\til[u],\tab \til[q]=e^{\pi ib^{-2}},}
and $\{u,v\}$ commute with $\{\til[u],\til[v]\}$ in the weak sense (the spectrum do not commute), so that the operators generated by $\{u,v,\til[u],\til[v]\}$ acting on $L^2(\R)$ is algebraically irreducible. This idea is subsequently extended to the modular double of quantum groups \cite{Fa2}.

The key technical tool in the above contexts is given by the following Lemma introduced by Volkov \cite{Vo}, and the self-adjointness is analyzed in \cite{Ru}, see also \cite{BT,Ip}:
\begin{Lem}\label{b2lem} If $u$ and $v$ are positive essentially self-adjoint operators such that $uv=q^2vu$ for $q=e^{\pi ib^2}$ in the sense above, then $u+v$ is also positive essentially self-adjoint, and we have
\Eq{(u+v)^{\frac{1}{b^2}}=u^{\frac{1}{b^2}}+v^{\frac{1}{b^2}}.}
Hence by induction, if we have $u_iu_j=q^2u_ju_i$ for every $i<j$, then the sum
$$z=\sum_i u_i$$
is positive essentially self-adjoint, and we have
\Eq{z^{\frac{1}{b^2}}=\sum_i u_i^{\frac{1}{b^2}}.}
\end{Lem}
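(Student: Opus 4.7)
The plan is to reduce to a concrete Schr\"odinger-type representation and then invoke the modular duality of the quantum Liouville operator $e^{2\pi bx}+e^{2\pi bp}$.

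First, by the uniqueness of integrable irreducible $*$-representations of the Weyl relation $uv=q^2vu$ at generic $|q|=1$, any such pair of positive essentially self-adjoint operators is unitarily equivalent, up to multiplicity, to the canonical pair $u=e^{2\pi bx}$, $v=e^{2\pi bp}$ acting on $L^2(\R)$. It therefore suffices to prove both assertions in this concrete model. For essential self-adjointness of $u+v$ I would exhibit the Schwartz space $\mathcal{S}(\R)$ as a common core and apply a Nelson/Glimm--Jaffe commutator argument; alternatively one may invoke the classical fact that $e^{2\pi bx}+e^{2\pi bp}$ is the Hamiltonian of the modular Liouville system (equivalently the open $\mathfrak{sl}(2)$ quantum Toda chain), whose essential self-adjointness on the Schwartz class is standard.

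The heart of the argument is the identity $(u+v)^{1/b^2}=u^{1/b^2}+v^{1/b^2}$. Setting $\tilde u:=u^{1/b^2}=e^{2\pi x/b}$ and $\tilde v:=v^{1/b^2}=e^{2\pi p/b}$ by functional calculus, these satisfy $\tilde u\tilde v=\tilde q^2\tilde v\tilde u$ with $\tilde q=e^{\pi i/b^2}$. I would then produce generalized eigenfunctions of $u+v$ as contour integrals of the non-compact quantum dilogarithm $G_b$ of Faddeev. The crucial input is the self-duality of $G_b$ under $b\leftrightarrow b^{-1}$: the same kernel simultaneously diagonalizes $\tilde u+\tilde v$, with the corresponding eigenvalue being precisely the $(1/b^2)$-th power of the eigenvalue of $u+v$. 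This coincidence of spectral resolutions on a common dense domain forces the positive essentially self-adjoint operators $(u+v)^{1/b^2}$ and $\tilde u+\tilde v$ to coincide. The many-summand statement is then bookkeeping: if $u_iu_j=q^2u_ju_i$ for $i<j$, then $w_k:=u_1+\cdots+u_k$ satisfies $w_k\,u_{k+1}=q^2\,u_{k+1}\,w_k$ by linearity, so iterating the two-term case produces both the essential self-adjointness of $z=\sum_i u_i$ and the identity $z^{1/b^2}=\sum_i u_i^{1/b^2}$ (using that the tilde variables inherit the same pairwise $\tilde q$-commutation).

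The main obstacle is the spectral-analytic step underlying the modular duality: constructing the explicit $G_b$-integral eigentransform of $u+v$, and verifying the convergence and completeness of the associated spectral resolution, are the delicate inputs. This is precisely where we would lean on \cite{Vo, Ru, BT, Ip}.
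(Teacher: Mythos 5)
Your outline is correct and coincides with what the paper actually does: the paper gives no proof of this lemma, attributing the identity to Volkov \cite{Vo} and the self-adjointness analysis to Ruijsenaars \cite{Ru} (see also \cite{BT,Ip}), and your sketch --- reduction to the Schr\"odinger pair $u=e^{2\pi bx}$, $v=e^{2\pi bp}$ via uniqueness of integrable representations, diagonalization of $e^{2\pi bx}+e^{2\pi bp}$ by quantum-dilogarithm eigenfunctions, and the modular self-duality $b\leftrightarrow b^{-1}$ --- is precisely the argument of those references. The one step worth tightening is the induction: ``by linearity'' yields only the formal relation $w_k\,u_{k+1}=q^2u_{k+1}w_k$, whereas to reapply the two-term case you must check that the pair $(w_k,u_{k+1})$ is integrable in the sense of Definition \ref{Sch}; this does hold, since conjugation by the unitary $u_{k+1}^{it}$ rescales each $u_i$ with $i\le k$ by the same positive scalar, hence rescales $w_k$ and therefore all of its imaginary powers in exactly the required way.
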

Hence using the Gauss-Lusztig decomposition of $GL_q(N)$ defined in Theorem \ref{main}, we can define
\begin{Def} $GL_q^+(N,\R)$ is the algebra generated by positive self-adjoint operators $\{a_{mn},u_m,v_m,b_{mn}\}$ so that the $q$-commutation relations are satisfied in the sense of Definition \ref{Sch} above. These operators are acting by the Weyl pair $u=e^{2\pi bx}, v=e^{2\pi bp}$ on the Hilbert space  $\cH=L^2(\R^{\lfloor\frac{N^2}{2}\rfloor})$ using Theorem \ref{qtori2}.
\end{Def}

Using Lemma \ref{b2lem}, the notion of $GL_q^+(N,\R)$ as the $q$-analogue of the classical totally positive semi-group is justified:
\begin{Cor}\label{cor1} Under the Gauss-Lusztig decomposition, the generators $z_{ij}$, as well as the initial minors $x_{ij}$ and the quantum determinant $det_q$ are represented by positive essentially self-adjoint operators.
\end{Cor}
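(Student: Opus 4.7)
The plan is to prove the corollary in three successive stages, leveraging the quantum torus realization together with Lemma \ref{b2lem}.

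\textbf{First}, I would show that each generator $a_{mn}, b_{mn}, u_m, v_m$ is positive essentially self-adjoint by the very definition of $GL_q^+(N,\R)$: under the embedding of Theorem \ref{qtori2}, each is realized on $L^2(\R^{\lfloor N^2/2\rfloor})$ as an exponential $e^{2\pi bL}$ for a real linear combination $L$ of the canonical Heisenberg generators $x_i, p_i$. The explicit $q$-phases appearing in the worked examples (for instance the $q$ in $a_{22}=qV_2U_2U_3$) are exactly the BCH corrections needed to absorb the phase picked up when reordering non-commuting Weyl pairs, so each individual generator sits as a positive exponential and is positive essentially self-adjoint by functional calculus.

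\textbf{Second}, I would handle $\det_q$ and the initial minors $x_{ij}$. Since the generators of $U^+$, $T$, and $U^-$ mutually commute with those of the opposite sides (Theorem \ref{main}(3) and the Corollary of Theorem \ref{main}), the quantum determinant is multiplicative on the Gauss--Lusztig decomposition, yielding $\det_q(Z)=\det_q(U^-)\det_q(T)\det_q(U^+)=\prod_{k=1}^N T_k$ with $T_k=u_kv_{k-1}$ and the convention $v_0=u_N=1$. The $T_k$'s lie in disjoint Weyl pairs, hence mutually commute, and each is positive self-adjoint by the first stage; so $\det_q$ is a product of commuting positive operators, hence positive self-adjoint. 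For $x_{i,i+j}$, Theorem \ref{main} gives the specific expression \eqref{order}; iterated BCH in the torus realization turns this product into $q^\gamma\cdot e^{2\pi b\sum_k L_k}$ for some overall phase $q^\gamma$, and I would verify by induction on $i+j$ (consistent with the fact that the definition \eqref{detn} of $x_{ij}$ carries no explicit $q$-prefactor) that $\gamma=0$, so $x_{ij}$ is positive essentially self-adjoint.

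\textbf{Third}, for the matrix entries $z_{ij}$, the key observation is that expanding $Z=U^-TU^+$ gives each $z_{ij}$ as a polynomial with \emph{positive integer} coefficients in $\{a_{mn},b_{mn},u_m,v_m\}$ (no $q$-factors, no signs); compare formula \eqref{eachterm} for the $T^+U^+$ component. Each monomial is positive essentially self-adjoint by the second-stage argument. To upgrade to positivity of the whole sum, I would linearly order the monomials (for instance, lexicographically on the tuple $(t_1,\ldots,t_j)$ of column indices), verify via Theorem \ref{main} that each consecutive pair $q^2$-commutes in the sense required by Lemma \ref{b2lem}, and apply that lemma iteratively. \textbf{The main obstacle} lies precisely here: one must confirm combinatorially that such a linear ordering exists in which consecutive pairs are related by exactly $q^2$ (and not $q^{2k}$ for some $|k|>1$). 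For the $T^+U^+$ sector this reduces to a direct analysis of how two strictly increasing column-index tuples interact under the rules of Theorem \ref{main}, and the full case $Z=U^-TU^+$ then factorizes into the $U^-T^-$ and $T^+U^+$ sectors by the commutation property~(3) of Theorem \ref{main}.
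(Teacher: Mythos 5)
Your proposal follows essentially the same route as the paper. The paper's proof consists precisely of your third stage: it takes positivity of the generators and of the monomials for granted, invokes the closed form \eqref{eachterm} for $z_{i,i+j}$ as a sum of monomials $S_{i,t}$, and applies Lemma \ref{b2lem} after noting that $S_{i,t}S_{i,t'}=q^2S_{i,t'}S_{i,t}$ whenever $S_{i,t'}$ appears later in the sum. Your first two stages supply details the paper leaves implicit (positivity of the torus realization, multiplicativity of $det_q$ over the decomposition, and the vanishing of the overall $q$-phase in the ordered monomial \eqref{order} for $x_{ij}$), and these are all sound.

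One point needs strengthening. You propose to verify that each \emph{consecutive} pair of monomials $q^2$-commutes, but Lemma \ref{b2lem} requires $u_iu_j=q^2u_ju_i$ for \emph{every} ordered pair $i<j$: the inductive step adds $u_n$ to the partial sum $\sum_{i<n}u_i$ and needs each earlier summand individually to $q^2$-commute with $u_n$. A chain of consecutive $q^2$-commutations does not imply this (non-adjacent pairs could a priori pick up $q^{2k}$ with $|k|>1$, or commute). The claim you must establish — and the one the paper asserts — is the stronger statement that \emph{all} pairs of monomials $S_{i,t}$, $S_{i,t'}$ with $t'$ later than $t$ in the lexicographic order on the column tuples satisfy exactly the $q^2$ relation. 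Once the combinatorial check is formulated for arbitrary pairs of increasing tuples $(t_1<\dots<t_j)$ rather than adjacent ones, the rest of your argument (including the factorization of the full $Z=U^-TU^+$ case through the two commuting sectors) goes through as the paper intends.
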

\begin{proof} The only issue concerns the essential self-adjointness of $z_{ij}$. From the expression \eqref{eachterm}, we note that $$S_{i,t}S_{i,t'}=q^2S_{i,t'}S_{i,t},$$
whenever $S_{i,t'}$ appears later than $S_{i,t}$ in the sum. Hence using Lemma \ref{b2lem}, we conclude that each $z_{i,i+j}$ is positive and essentially self-adjoint.
\end{proof}

Let $\til[q]=e^{\pi ib^{-2}}$ and define $GL_{\til[q]}^+(N,\R)$ to be the algebra generated by the positive self-adjoint operators $\{\til[a_{mn}],\til[u_m],\til[v_m],\til[b_{mn}]\}$ where $\til[X]:=X^{\frac{1}{b^2}}$. Then the last statement of Lemma \ref{b2lem} establishes the transcendental relations between the two parts of the modular double $GL_{q\til[q]}^+(N,\R):=GL_q^+(N,\R)\ox GL_{\til[q]}^+(N,\R)$:

\begin{Thm}  The generators $\til[z]_{ij}$ represented by the Gauss-Lusztig decomposition is related to $z_{ij}$ by
\Eq{\label{zz}\til[z_{ij}]=z_{ij}^{\frac{1}{b^2}},}
which is well-defined as positive essentially self-adjoint operators. Furthermore the co-product is preserved:
\Eq{(\D z_{ij})^{\frac{1}{b^2}}=\D\til[z_{ij}],}
and $z_{ij}$ commutes (weakly) with $\til[z_{ij}]$.
\end{Thm}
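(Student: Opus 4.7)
The strategy is to push the exponent $1/b^2$ through the Gauss--Lusztig expansion of $z_{ij}$ term by term, using only Lemma~\ref{b2lem} (which converts sums of $q^2$-ordered positive self-adjoint operators to sums of their $b^{-2}$-powers) together with the observation that the individual Lusztig monomials happen to consist of honestly commuting factors.

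First, I would take the closed form \eqref{eachterm} from the proof of Lemma~\ref{lem-order}, writing $z_{i,i+j}=\sum_t S_{i,t}$ with $S_{i,t}=v_{i-1}a_{i,t_1}a_{i+1,t_2}\cdots a_{i+j-1,t_j}$ and $t_1<t_2<\cdots <t_j$. From the proof of Corollary~\ref{cor1} the monomials $q^2$-commute in the required order, so Lemma~\ref{b2lem} gives $z_{i,i+j}^{1/b^2}=\sum_t S_{i,t}^{1/b^2}$. The next step, and the conceptually crucial one, is to inspect the commutation relations of Theorem~\ref{main} applied \emph{inside} a fixed monomial $S_{i,t}$: the indices satisfy $t_k<t_{k+1}$, so $a_{i+k-1,t_k}$ and $a_{i+k,t_{k+1}}$ fall into the ``commute otherwise'' case (the relation $a_{m,n}a_{m-1,n'}=q^2a_{m-1,n'}a_{m,n}$ requires $n\le n'$, while our indices satisfy $t_{k+1}>t_k$); factors at distance $\ge 2$ in row index commute automatically; and $v_{i-1}$ commutes with every $a_{i+s,*}$ since the $v$-relation only fires within the same row. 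Hence $S_{i,t}$ is a product of mutually commuting positive self-adjoints, so $(S_{i,t})^{1/b^2}=\tilde v_{i-1}\tilde a_{i,t_1}\cdots\tilde a_{i+j-1,t_j}=\tilde S_{i,t}$ by standard functional calculus. Summing yields \eqref{zz}.

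For the compatibility with the coproduct I would compute $\Delta z_{ij}=\sum_k z_{ik}\otimes z_{kj}$ and check that its $N$ summands $q^2$-commute in the correct order. Using the $GL_q(N)$ relations, for $k<k'$ one has $z_{ik}z_{ik'}=z_{ik'}z_{ik}$ (same row) and $z_{kj}z_{k'j}=q^{-2}z_{k'j}z_{kj}$ (same column), hence on the tensor product
\[
(z_{ik'}\otimes z_{k'j})(z_{ik}\otimes z_{kj})=q^{2}(z_{ik}\otimes z_{kj})(z_{ik'}\otimes z_{k'j}).
\]
Each individual summand is a product of two commuting positive self-adjoints $(z_{ik}\otimes 1)(1\otimes z_{kj})$, so $(z_{ik}\otimes z_{kj})^{1/b^2}=\tilde z_{ik}\otimes \tilde z_{kj}$, and a second application of Lemma~\ref{b2lem} to the $q^2$-ordered sum gives $(\Delta z_{ij})^{1/b^2}=\sum_k\tilde z_{ik}\otimes\tilde z_{kj}=\Delta\tilde z_{ij}$.

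Finally, for the weak commutation of $z_{ij}$ with $\tilde z_{ij}$ I would reduce to the canonical Weyl pair $\{u=e^{2\pi bx},v=e^{2\pi bp}\}$ supplied by Theorem~\ref{qtori2}. For the generators one has $u^{1/b^2}=e^{2\pi x/b}$ and $v^{1/b^2}=e^{2\pi p/b}$, and the Weyl identity together with $q^{2/b^2}=e^{2\pi i}=1$ gives $u\,v^{1/b^2}=v^{1/b^2}\,u$; symmetrically $v\,u^{1/b^2}=u^{1/b^2}\,v$. Since every $z_{ij}$ and every $\tilde z_{ij}$ is built from sums of ordered products of these Weyl elements, every untilded generator commutes with every tilded generator, which gives the weak commutation.

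The main obstacle is the verification in the second paragraph: one must be genuinely careful that the Lusztig monomial $S_{i,t}$ is built from mutually \emph{commuting} (not merely $q^2$-commuting) factors, since otherwise $(S_{i,t})^{1/b^2}$ would acquire extra $q$-phases and the passage $z_{ij}^{1/b^2}\mapsto\tilde z_{ij}$ would fail. Once this combinatorial check is in place, the rest is a routine assembly from Lemma~\ref{b2lem} and the $GL_q(N)$ relations.
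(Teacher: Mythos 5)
Your argument follows the paper's own proof essentially verbatim: \eqref{zz} via the expansion \eqref{eachterm} and Lemma~\ref{b2lem} (the paper simply cites the proof of Corollary~\ref{cor1}, whereas you usefully make explicit the check that the factors inside each monomial $S_{i,t}$ genuinely commute so that the $1/b^2$ power distributes over the product), the coproduct statement via the $q^2$-commuting tensor summands, and the weak commutation via the canonical Weyl pairs. One harmless slip: for $k<k'$ the same-column relation is $z_{kj}z_{k'j}=q^{2}z_{k'j}z_{kj}$ (not $q^{-2}$), so the tensor relation should read $(z_{ik}\otimes z_{kj})(z_{ik'}\otimes z_{k'j})=q^{2}(z_{ik'}\otimes z_{k'j})(z_{ik}\otimes z_{kj})$ as in the paper; your reversed version still feeds into Lemma~\ref{b2lem} after reordering the (commutative) sum, so the conclusion is unaffected.
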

\begin{proof} Equation \eqref{zz} follows immediately from the proof of Corollary \ref{cor1}. The co-product is preserved because 
\Eq{\D(z_{ij})=\sum_k z_{ik}\ox z_{kj},}
and $$(z_{ik}\ox z_{kj})(z_{ik'}\ox z_{k'j})=q^2(z_{ik'}\ox z_{k'j})(z_{ik}\ox z_{kj})$$
whenever $k<k'$, hence we can apply Lemma \ref{b2lem} and induction to obtain 
$$\D(z_{ij})^{\frac{1}{b^2}}=\sum_k z_{ik}^{\frac{1}{b^2}}\ox z_{kj}^{\frac{1}{b^2}}=\D(\til[z_{ij}]).$$
Since $\{u,v\}$ commute weakly with $\{\til[u],\til[v]\}$, the last statement follows by the Gauss-Lusztig decomposition.
\end{proof}


\section{$GL_{q\til[q]}^+(N,\R)$ in the $L^2$ setting}\label{sec:L2}
In the final section we generalize the approach from \cite{Ip} to higher rank, where the $L^2$-space for $GL_{q\til[q]}^+(2,\R)$ is introduced. The $L^2$-norm comes from the classical counterpart of the Haar measure. In the classical case, under the decomposition given by \eqref{Gauss}, the Haar measure induced on the coordinates is given by
\Eq{dg=\frac{du_1}{u_1}dv_1\frac{dv_2}{v_2}du_2.}
When the original positive variable has $dx$ as the measure, the Mellin transform gives
\Eq{\|f(x)\|_{dx}^2=\left\|\int_{\R+i0} f(s)x^{is}ds\right\|^2=\int_\R |f(s+\frac{i}{2})|^2 ds.\label{L2norm}}
Similarly, when the measure of the original positive variable is given by $\frac{dy}{y}$, the Mellin transformed measure remains unchanged:
\Eq{\|f(y)\|_{\frac{dy}{y}}^2=\left\|\int_\R f(s)y^{is}ds\right\|^2 = \int_\R |f(s)|^2ds.\label{L2norm2}}
It is shown in \cite{Ip} that in the quantum setting, using certain GNS representation of $GL_{q\til[q]}^+(2,\R)$ in the $C^*$-algebraic setting, there exists a Haar weight which induces an $L^2$-norm on entire functions with 4 \emph{classical} coordinates $f(u_1,v_1,u_2,v_2)$, where the right hand side of \eqref{L2norm} is replaced by
\Eq{\int_\R \left|f(s+\frac{iQ}{2})\right|^2 ds}
with $Q=b+b\inv$, while the right hand side of \eqref{L2norm2} remains unchanged.

For higher rank, we notice that the Haar measure for the usual totally positive parametrization 
\Eq{U_{>0}^-(b_{ij})T_{>0}(u_{i})U_{>0}^+(a_{ij})}
is given by
\Eq{\prod_{k=1}^N\frac{du_k}{u_k}\prod_{1\leq j\leq i\leq N-1}(a_{ij}b_{ij})^{N-1-i}da_{ij}db_{ij}.}
This is obtained by calculating the Jacobian of the change of variables \eqref{eachterm} with coordinates of the usual Gauss decomposition, and using the fact that the Haar measure induces a multiplicative measure on the diagonal matrices, and the standard $L^2$ measure on the upper/lower triangular unipotent matrices.

Now let us introduce a change of variables for our cluster variables:
\Eq{X_{ij}:=\left\{\begin{array}{cc}x_{ii}x_{i-1,i-1}\inv&i=j\\x_{ij}x_{ii}\inv&i< j\\x_{ij}x_{jj}\inv&i> j,\end{array}\right.}
i.e. we use only $u_i$ from the diagonal matrices, and the cluster variables of the upper/lower triangular unipotent matrices in terms of products of $a$'s or $b$'s only. Then we have

\begin{Prop} The (classical) Haar measure on $GL^+(N,\R)$ is given by:
\Eq{\left(\prod_{i,j=1}^{N-1}\frac{dX_{ij}}{X_{ij}}\right)\left(\prod_{k=1}^{N-1}dX_{N,k}dX_{k,N}\right) \frac{dX_{NN}}{X_{NN}}.}
\end{Prop}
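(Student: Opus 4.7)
The plan is to compute the Jacobian of the change of variables $(b_{ij},u_k,a_{ij})\to\{X_{ij}\}$ directly, exploiting the fact that it cleanly splits into three independent blocks. First I would write down explicit expressions for the initial minors: since $U^-$ is lower unipotent, the first $i$ rows of $g=U^-TU^+$ are obtained from those of $TU^+$ by left multiplication with an $i\times i$ unipotent matrix, so every $i\times i$ initial upper minor of $g$ equals the corresponding one of $TU^+$, and a direct computation gives
\Eq{x_{i,i+p}=\Big(\prod_{k=1}^{i}u_k\Big)\prod_{m=1}^{i}\prod_{n=1}^{p}a_{m+n-1,n},}
together with the symmetric formula for $x_{j+p,j}$ in the $u_k$'s and $b$'s. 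Taking the ratios defining $X_{ij}$, the $u$-factors cancel out: $X_{ii}=u_i$, while $X_{i,i+p}$ for $p\ge 1$ is a monomial in the $a$'s only and $X_{j+p,j}$ a monomial in the $b$'s only. Hence the Jacobian factorizes completely.

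For the upper block I would set $\alpha_{mn}=\log a_{m,n}$, $Y_{i,i+p}=\log X_{i,i+p}$, and reindex the $a$'s by $(s,n):=(m-n+1,n)$; this bijects them with the index set $I=\{(i,p):i,p\ge 1,\,i+p\le N\}$ parametrising the upper $X$'s. Each $Y_{i,i+p}$ is then a sum of precisely those $\alpha_{s,n}$ with $s\le i$ and $n\le p$, so in lexicographic order on $I$ the coefficient matrix is lower unitriangular and has determinant $1$, yielding
\Eq{\prod_{1\le i<j\le N}\frac{dX_{ij}}{X_{ij}}=\prod_{1\le n\le m\le N-1}\frac{da_{m,n}}{a_{m,n}}.}
The same argument on the lower-triangular side gives the analogous identity relating the $X_{ij}$ with $i>j$ to the $b_{m,n}$.

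It remains to match the power $a_{m,n}^{N-1-m}$ in the classical Haar density. Writing $a_{m,n}^{N-1-m}\,da_{m,n}=a_{m,n}^{N-m}\cdot(da_{m,n}/a_{m,n})$, I would count that $a_{m,n}$ appears in the boundary cluster variable $X_{k,N}$ precisely when $m-n+1\le k\le N-n$, i.e.\ for $N-m$ values of $k$, so
\Eq{\prod_{k=1}^{N-1}X_{k,N}=\prod_{1\le n\le m\le N-1}a_{m,n}^{N-m}.}
Multiplying the previous Jacobian identity by this equality converts each $dX_{k,N}/X_{k,N}$ on the left into $dX_{k,N}$ and exactly produces the required $a_{m,n}^{N-m}$ weight on the right; the symmetric count on the $b$-side converts $dX_{N,k}/X_{N,k}$ to $dX_{N,k}$. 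Combined with the diagonal identity $\prod_{k=1}^{N}du_k/u_k=\prod_{k=1}^{N}dX_{kk}/X_{kk}$ coming from $X_{kk}=u_k$, this reassembles to the stated Haar measure.

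The main obstacle is purely bookkeeping: verifying the lex-unitriangularity of the Jacobian and the identity $\#\{k:m-n+1\le k\le N-n\}=N-m$, and checking that these combine to reproduce the exponent $N-1-m$ after absorbing the extra $+1$ into the conversion $dX_{k,N}/X_{k,N}\to dX_{k,N}$. It is precisely this dovetailing that singles out the boundary variables $X_{k,N}$ and $X_{N,k}$ as the ones carrying plain $dX$ rather than $dX/X$.
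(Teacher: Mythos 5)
Your proof is correct and follows exactly the route the paper intends (the paper states the Proposition without proof, having just recorded the Haar density in the Lusztig coordinates $\{b_{ij},u_k,a_{ij}\}$ and the definition of $X_{ij}$): you verify that the ratios kill the $u$-dependence of the off-diagonal clusters, that the log-Jacobian of $\{a_{mn}\}\to\{X_{ij}\}_{i<j}$ is unitriangular, and that $\prod_k X_{k,N}=\prod a_{mn}^{N-m}$ converts the multiplicative measure into $a_{mn}^{N-1-m}da_{mn}$, which is precisely the dovetailing that forces the boundary variables to carry $dX$ rather than $dX/X$. All three counting steps check out, so this is a complete proof of the assertion by the same method.
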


Hence following the idea in the case of $GL_{q\til[q]}^+(2,\R)$ we can define $C_\oo(GL_{q\til[q]}^+(N,\R))$ and $L^2(GL_{q\til[q]}^+(N,\R))$ as follow:

\begin{Def} The $C^*$-algebra $C_\oo(GL_{q\til[q]}^+(N,\R))$ is the norm closure of operators of the form:
\Eq{F:=\int_{\R^{N^2}} f(s_{11},...,s_{NN})\prod_{m,n=1}^{N}(X_{mn})^{ib\inv s_{mn}} \prod_{m,n=1}^N ds_{mn},}
where $f(s_{11},...,s_{NN})$ are smooth analytic rapidly decreasing functions in each variable $s_{ij}\in~\R$, each $X_{mn}^{ib\inv s_{mn}}$ is realized as a unitary operator defined by the formula in Theorem \ref{qtori2}, and the $C^*$-norm is defined as the operator norm. Furthermore, the $C^*$-algebra corresponding to difference choices of realization of $X_{mn}$ are isomorphic.
\end{Def}
\begin{Rem} It will be interesting to put $C_\oo(GL_{q\til[q]}^+(N,\R))$ in the context of locally compact quantum group, for example in the sense of \cite{KV}. Although it is known in the case when $N=2$ \cite{Ip}, in general it is not clear how the co-product $\D$ defined on $z_{ij}$ translates to the generators $X_{mn}$, hence the main difficulty will be to show the density conditions involving $\D$.
\end{Rem}

For the sake of harmonic analysis, we can also define an $L^2$-space where it does not depend on the choice of embedding of $GL_q^+(N,\R)$ at all.
\begin{Def}
We define $L^2(GL_{q\til[q]}^+(N,\R))\simeq L^2(\R^{N^2})$ by giving an $L^2$-norm for the (rapidly decreasing entire) functions $F$:
\Eq{\|F\|^2 := \int_{\R^{N^2}}\left|f(s_{ij}+(\d_{i,N}-\d_{j,N})^2\frac{iQ}{2})\right|^2 \prod_{i,j=1}^N ds_{ij}.}
and taking the $L^2$-completion.
\end{Def}
A natural class of representations for split real quantum groups $U_{q\til[q]}(\g_\R)$ for arbitrary type simple Lie algebra $\g$, called the positive principal series representations, generalizing the $\sl(2,\R)$ case is introduced in \cite{FI} and constructed in \cite{Ip2,Ip3}. It is conjectured in \cite{PT1} and shown in \cite{Ip} that the left and right regular representations of $U_{q\til[q]}(\gl(2,\R))$ acting on $L^2(GL_{q\til[q]}^+(2,\R))$ naturally decompose as a direct integral of tensor product of the positive representations $\cP_{\l,s}$:
\Eq{L^2(GL_{q\til[q]}^+(2,\R))\simeq  \int_\R^\o+ \int_{\R_+}^\o+\cP_{\l,s}\ox\cP_{\l,-s}d\mu(\l)ds}
where $\mu(\l)$ is expressed in terms of the quantum dilogarithm function. Hence analogous to the harmonic analysis of $L^2(GL_{q\til[q]}^+(2,\R))$, it is natural to ask the following question:
\begin{Con} Do the left and right regular representations of $U_{q\til[q]}(\gl(N,\R))$ on the $L^2(GL_{q\til[q]}^+(N,\R))$ space decompose as a direct integral of tensor product of the positive principal series representations of $U_{q\til[q]}(\gl(N,\R))$?

Furthermore, can analogous statements be defined for $U_{q\til[q]}(\g_\R)$ for arbitrary type simple Lie algebra $\g$?
\end{Con}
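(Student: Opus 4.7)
The plan is to extend the $N=2$ Plancherel decomposition of \cite{Ip} to arbitrary $N$ by exploiting the Gauss-Lusztig decomposition of Theorem \ref{main} together with the explicit Haar measure in the Proposition above. A preliminary dimension count is encouraging: $L^2(GL_{q\til[q]}^+(N,\R))\simeq L^2(\R^{N^2})$, the positive representations are realized on $L^2(\R^{N(N-1)/2})$ \cite{Ip3}, and the Cartan torus contributes $N$ continuous parameters, so the target direct integral has total continuous dimension $N(N-1)+N=N^2$, matching exactly.

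The first step is to write the left and right regular actions of $U_{q\til[q]}(\gl(N,\R))$ explicitly on $L^2(GL_{q\til[q]}^+(N,\R))$ using the decomposition $Z=U^-TU^+$. A Chevalley generator $E_i$ acting by right multiplication should affect only the $U^+$-factor $\{a_{mn}\}$, $F_i$ acting by left multiplication should affect only the $U^-$-factor $\{b_{mn}\}$, and the Cartan generators $K_i$ should scale the diagonal variables $T_k=u_kv_{k-1}$. This manifest separation of left and right actions across the two unipotent factors mirrors the tensor product structure of the conjectured decomposition, and it is consistent with the fact that $\{a_{mn}\}$ commutes with $\{b_{mn}\}$ by construction in Theorem \ref{main}.

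The second step is to perform a Mellin transform in the $N$ torus variables $T_k$, equivalently in the diagonal cluster variables $X_{kk}$, to diagonalize the center; this should yield
\Eq{L^2(GL_{q\til[q]}^+(N,\R))\simeq\int_{\R^N}^\o+\cH_\lambda\, d\mu(\lambda)}
with fibers $\cH_\lambda\simeq L^2(\R^{N(N-1)})$. For each $\lambda$ one then aims to identify $\cH_\lambda$ with $\cP_{\lambda,s}\ox\cP_{\lambda,-s}$ by matching the action of Chevalley generators on the $a$-coordinates, respectively $b$-coordinates, with the realization of the positive principal series representations on Lusztig coordinates in \cite{Ip3}. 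The Plancherel measure $d\mu(\lambda)$ should then emerge from a contour deformation, appearing as a product of quantum dilogarithm factors that generalize the $N=2$ measure.

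The main obstacle will be establishing the identification $\cH_\lambda\simeq\cP_{\lambda,s}\ox\cP_{\lambda,-s}$ as a unitary $U_{q\til[q]}(\gl(N,\R))$-bimodule. The positive representations of \cite{Ip3} are built with respect to a fixed reduced expression for $w_0$, and matching them to the Gauss-Lusztig variables $a_{mn}$ will likely require a sequence of quantum cluster mutations (Theorem \ref{GLcomtrans}), each producing pentagon-type identities for the quantum dilogarithm. Additionally, handling the direct integral rigorously at the $C^*$-algebraic level, within the locally compact quantum group framework of \cite{KV} alluded to in the preceding Remark, appears to require extending the Haar weight machinery beyond what is currently available for $N>2$. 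A reasonable intermediate target is the $N=3$ case, for which all ingredients (positive representations, cluster mutations, and quantum dilogarithm identities) are already fully explicit; an explicit check there should pin down the form of $d\mu(\lambda)$ and guide an inductive approach exploiting the parabolic structure implicit in the Gauss-Lusztig decomposition. The type-free extension to $U_{q\til[q]}(\g_\R)$ would then hinge on first developing an analogous positive Gauss-Lusztig decomposition for arbitrary simply-laced $\g$.
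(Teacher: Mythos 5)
This statement is posed in the paper as an open \emph{conjecture}; the paper supplies no proof, so there is nothing to compare your argument against. What you have written is a research programme rather than a proof, and you candidly flag its main obstacles yourself. To be concrete about where the genuine gaps lie: the dimension count $N(N-1)+N=N^2$ is suggestive but carries no proof content; the assertion that $E_i$ in the right regular representation touches only the $a_{mn}$ and $F_i$ in the left regular representation only the $b_{mn}$ is an unproved claim --- already for $N=2$ in \cite{Ip} the regular action computed from the coproduct $\D(z_{ij})=\sum_k z_{ik}\ox z_{kj}$ mixes the Gauss coordinates nontrivially, and the clean separation you posit would have to be extracted by an explicit (and delicate) calculation. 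The two steps that actually constitute the theorem --- the unitary identification of each fiber $\cH_\lambda$ with $\cP_{\lambda,s}\ox\cP_{\lambda,-s}$ as a bimodule, and the determination of the Plancherel measure $d\mu(\lambda)$ --- are precisely the steps you defer, and for $N>2$ neither the Haar weight in the sense of \cite{KV} nor the required quantum dilogarithm identities are available in the literature; the paper itself notes in the preceding Remark that even the coproduct is not understood in the cluster coordinates $X_{mn}$ for $N>2$.

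That said, as a plan of attack your outline is consistent with what the paper suggests by analogy with the $N=2$ case: Mellin transform in the torus variables, matching against the positive representations of \cite{FI,Ip2,Ip3} realized in Lusztig coordinates, and cluster mutations via Theorem \ref{GLcomtrans} to change reduced expressions of $w_0$. The proposal to first settle $N=3$ explicitly is sensible. But none of this upgrades the conjecture to a theorem, and you should present it as a strategy, not a proof.
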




\begin{thebibliography}{99}
\bibitem{BFZ}
A. Berenstein, S. Fomin, A. Zelevinsky,
\textit{Parametrizationf of canonical bases and totally positive matrices},
Adv. in Math., \textbf{122}, (1996), 49-149
\bibitem{BZ}
A. Berenstein, A. Zelevinsky,
\textit{Quantum cluster algebras},
Adv. in Math., \textbf{195}, (2005), 405-455
\bibitem{BT}
  A.G. Bytsko, K. Teschner,
  \textit{R-Operator, co-product and Haar-measure for the modular double of $U_q(\sl(2,\R))$},
  Comm. Math. Phys. \textbf{240}, (2003), 171-196
\bibitem{CP}
V. Chari, A. Pressley,
\textit{A guide to quantum group},
Cambridge University Press, (1995)
\bibitem{DKS}
E. V. Damaskinsky, P. P. Kulish, M. A. Sokolov,
\textit{Gauss decomposition for quantum groups and supergroups},
arXiv:q-alg/9505001v1, (1995)
\bibitem{Fa1}
  L.D. Faddeev,
  \textit{Discrete Heisenberg-Weyl group and modular group},
  Lett. Math. Phys. \textbf{34}, (1995), 249-254
\bibitem{Fa2}
  L.D. Faddeev,
  \textit{Modular double of quantum group},
  arXiv:math/9912078v1 [math.QA], (1999)
\bibitem{FG1}
  V. V. Fock, A.B. Goncharov,
  \textit{Cluster $\mathcal{X}$-varieties, amalgamation, and Poisson-Lie groups},
 Prog. in Math., \textbf{253}, (2006), 27-68
\bibitem{FG2}
  V. V. Fock, A.B. Goncharov,
  \textit{The quantum dilogarithm and representations of quantized cluster varieties},
  Inv. Math., \textbf{175} (2), (2007)  , 223-286
  \bibitem{FI}
  I. B. Frenkel, I. Ip,
  \textit{Positive representation of split real quantum groups and future perspectives},
  arXiv:1111:1033, (2011)
  \bibitem{FJ}
  I. B. Frenkel, M. Jardim,
  \textit{Quantum instantons with classical moduli spaces},
  Comm. Math. Phys., \textbf{237}(3), (2003), 471-505
 \bibitem{Ip}
  I. Ip,
  \textit{Representation of the quantum plane, its quantum double and harmonic analysis on $GL_q^+(2,\R)$},  arXiv:1108.5365, (2011) 
\bibitem{Ip2}
I. Ip,
\textit{Positive Representations of Split Real Simply-laced Quantum Groups}, arXiv:1203:2018, (2012)
\bibitem{Ip3}
I. Ip,
\textit{Positive representations of split real quantum groups of type $B_n$, $C_n$, $F_4$, and $G_2$}, arXiv:1205.2940, (2012)
 \bibitem{KV}
  J. Kustermans and S. Vaes,
  \textit{Locally compact quantum groups},
  Ann. Sci. Ecole Norm. Sup. (4) \textbf{33}, (2000), 837-934
\bibitem{Lu}
G. Lusztig,
\textit{Total positivity in reductive groups},
 in: `` Lie theory and geometry: in honor of B. Kostant,"  Progr. in Math.\textbf{123}, Birkhauser, (1994), 531-568
\bibitem{Sch}
K. Schm\"{u}dgen,
\textit{Operator representations of $\R_q^2$},
Publ. RIMS Kyoto Univ. \textbf{29}, (1993), 1030-1061
\bibitem{PW}
  P. Podle\'{s}, S.L. Woronowicz,
  \textit{Quantum deformation of Lorentz group},
  Commun Math. Phys. 130, (1990), 381-431 

\bibitem{PT1}
  B. Ponsot, J. Teschner,
  \textit{Liouville bootstrap via harmonic analysis on a noncompact quantum group},
  arXiv: hep-th/9911110, (1999)

\bibitem{Pu}
  W. Pusz,
  \textit{Quantum $GL(2,\C)$ group as double group over $'az+b'$ quantum group},
  Reports on Math. Phys., \textbf{49},  (2002), 113-122
\bibitem{Ru}
  S.N.M. Ruijsenaars,
  \textit{A unitary joint eigenfunction transform for the $A\D O$'s $\exp(ia_\pm d/dz)+\exp(2\pi z/a_\mp)$},
  J. Nonlinear Math. Phys. \textbf{12} Suppl. 2, (2005), 253-294
\bibitem{Vo}
  A. Yu. Volkov,
  \textit{Noncommutative hypergeometry},
  Comm. Math. Phys. \textbf{258}(2), (2005), 257-273
\bibitem{WZ}
S.L. Woronowicz, S. Zakrzewski,
\textit{Quantum Lorentz group having Gauss decomposition property},
Publ. RIMS, Kyoto Univ. \textbf{28} (1992), 809-824
\end{thebibliography}
\end{document}